\theoremstyle{plain}
\newtheorem{thm}{Theorem}[section]
\newtheorem{prop}[thm]{Proposition}
\newtheorem{lem}[thm]{Lemma}
\newtheorem{cor}[thm]{Corollary}
\theoremstyle{definition}
\newtheorem{dfn}[thm]{Definition}
\theoremstyle{remark}
\newtheorem{rmk}[thm]{Remark}
\newcommand{\Hom}{{\operatorname{Hom}}}
\newcommand{\ord}{{\operatorname{ord }}}
\newcommand{\SL}{{\operatorname{SL }}}
\newcommand{\GL}{{\operatorname{GL}}}
\newcommand{\smallmat}[4]{\bigl(\begin{smallmatrix}#1&#2\\#3&#4\end{smallmatrix}\bigr)}
\newcommand{\gert}{{\mathfrak{t}}}
\newcommand{\gerH}{{\mathfrak{H}}}
\newcommand{\calC}{{\mathcal{C}}}
\newcommand{\calX}{{\mathcal{X}}}
\def\C{\mathbb{C}}
\def\D{\mathbb{D}}
\def\M{\mathbb{M}}
\def\Q{\mathbb{Q}}
\def\R{\mathbb{R}}
\def\X{\mathbb{X}}
\def\Z{\mathbb{Z}}
\def\W{\mathbb W}
\def\Y{\mathbb Y}
\def\X{\mathbb X}
\def\mat#1#2#3#4{\left( \begin{array}{cc} #1 & #2 \\ #3 & #4 \end{array} \right) }
\def\2vector#1#2{\left( \begin{smallmatrix} #1 \\ #2 \end{smallmatrix}
\right)}
\def\deb{ \begin{equation} }
\def\fin{ \end{equation} }
\definecolor{Indigo}{rgb}{0.2,0.1,0.7}
\definecolor{Violet}{rgb}{0.5,0.1,0.7}
\definecolor{White}{rgb}{1,1,1}
\definecolor{Green}{rgb}{0.1,0.9,0.2}
\newcommand{\longmono}{\mbox{$\lhook\joinrel\longrightarrow$}}
\begin{document}

\title[Shimura-Shintani-Waldspurger]{The $\Lambda$-adic Shimura-Shintani-Waldspurger correspondence}
\date{\today}
\author{Matteo Longo, Marc-Hubert Nicole}

\begin{abstract} 
We generalize the $\Lambda$-adic Shintani lifting for $\GL_2(\Q)$ to indefinite quaternion algebras over $\Q$.

\end{abstract}

\address{Dipartimento di Matematica, Universit\`a di Padova, Via Trieste 63, 35121 Padova, Italy}
\email{mlongo@math.unipd.it}

\address{Institut de mathŽmatiques de Luminy, UniversitŽ d'Aix-Marseille, campus de Luminy, case 907, 13288 Marseille cedex 9, France}
\email{nicole@iml.univ-mrs.fr}

\subjclass[2000]{Primary 11F37, 11F30, 11F85}
\keywords{}
\maketitle

\section{Introduction}

Langlands's principle of functoriality predicts the existence of a staggering wealth of transfers (or lifts) between automorphic forms for different reductive groups. In recent years, attempts at the formulation of $p$-adic variants of Langlands's functoriality have been articulated in various special cases.  We prove the existence of the Shimura-Shintani-Waldspurger lift for $p$-adic families.

More precisely, Stevens, building on the work of Hida and Greenberg-Stevens, showed in \cite{St} the existence of a $\Lambda$-adic variant of the classical Shintani lifting of \cite{Shin} for $\GL_2(\Q)$. 
This $\Lambda$-adic lifting can be seen as a formal power series with coefficients in a finite extension of the Iwasawa algebra $\Lambda:=\Z_p[\![X]\!]$  
equipped with specialization maps interpolating classical Shintani 
lifts of classical modular forms appearing in a given Hida family. 

Shimura in \cite{Sh}, resp. Waldspurger in \cite{Wa} generalized the classical Shimura-Shintani correspondence to quaternion algebras over $\Q$, resp.
over any number field. In the $p$-adic realm, Hida (\cite{Hida3}) constructed a $\Lambda$-adic Shimura lifting, while Ramsey (\cite{Ra}) (resp. Park \cite{Pa}) extended the Shimura (resp. Shintani) lifting to the overconvergent setting.

In this paper, motivated by ulterior applications to Shimura curves over $\Q$, we generalize Stevens's result to any non-split rational indefinite quaternion algebra $B$, building on work of Shimura 
\cite{Sh}
and combining this with a result of Longo-Vigni \cite{LV1}. 
Our main result, for which the reader is referred to Theorem \ref{main} below, states the existence 
of a formal power series and specialization maps interpolating Shimura-Shintani-Waldspurger lifts of classical forms in a given $p$-adic family of automorphic forms
on the quaternion algebra $B$. 
The $\Lambda$-adic variant of Waldspurger's result appears computationally challenging (see remark in \cite[Intro.]{Pr}), but it seems within reach for real quadratic fields (cf. \cite{Po}). 

As an example of our main result, we consider the case of families with trivial character. 
Fix a prime number $p$ and a positive integer $N$ such that $p\nmid N$. Embed the set $\Z^{\geq 2}$ of integers greater or equal to 2 
in $\Hom(\Z_p^\times,\Z_p^\times)$ by sending $k\in\Z^{\geq 2}$ to the character $x\mapsto x^{k-2}$. 
Let $f_\infty$ be an Hida family of tame level $N$
passing through a form $f_0$ of level $\Gamma_0(Np)$ and weight $k_0$.  
There is a neighborhood $U$ of $k_0$ in $\Hom(\Z_p^\times,\Z_p^\times)$
such that, 
for any $k\in\Z^{\geq 2}\cap U$, the weight $k$ specialization of 
$f_\infty$ gives rise to an element 
$f_{k}\in S_{k}(\Gamma_0(Np))$. 
Fix a factorization $N=MD$ with $D>1$ a square-free product of an even number of primes and $(M,D)=1$
(we assume that such a factorization exists). 
Applying the Jacquet-Langlands correspondence 
we get for any $k\in\Z^{\geq 2}\cap U$ a modular form $f_k^{\rm JL}$ on $\Gamma$, which is the group of  
norm-one elements in an Eichler order $R$ of level $Mp$ contained in the indefinite rational 
quaternion algebra $B$ of discriminant $D$. One can show that these modular forms can be $p$-adically 
interpolated, up to scaling, in a neighborhood of $k_0$.
More precisely, let $\mathcal O$ be the ring of integers of a finite extension $F$ 
of $\Q_p$ and let $\D$ denote the $\mathcal O$-module of $\mathcal O$-valued measures on $\Z_p^2$ which 
are supported on the set of primitive elements in $\Z_p^2$. Let 
$\Gamma_0$ be the group of norm-one elements in an Eichler order $R_0\subseteq B$ 
containing $R$. There is a canonical action of $\Gamma_0$ on $\D$ 
(see \cite[\S 2.4]{LV1} for its description). Denote by $F_k$ the extension of $F$ generated  by the 
Fourier coefficients of $f_k$. 
Then there is an element $\Phi\in H^1(\Gamma_0,\D)$ 
and maps 
\[\rho_k:H^1(\Gamma_0,\D)\longrightarrow H^1(\Gamma,F_k)\] 
such that $\rho(k)(\Phi)=\phi_k$, the cohomology class associated to $f_k^{\rm JL}$, 
with $k$ in a neighborhood of $k_0$ 
(for this we need a suitable normalization of the cohomology class associated to $f_k^{\rm JL}$, which 
we do not touch for simplicity in this introduction). 
We view $\Phi$ as a quaternionic family of modular forms. 
To each $\phi_k$ we may apply the Shimura-Shintani-Waldspurger lifting (\cite{Sh}) and obtain a modular form $h_k$ of weigh $k+1/2$, 
level $4Np$ and  trivial character. We show that this collection of forms can be $p$-adically interpolated. For clarity's sake, we present the liftings and their $\Lambda$-adic variants in a diagram, in which the horizontal maps are specialization maps of the $p$-adic family to weight $k$; JL stands for the Jacquet-Langlands correspondence; SSW stands for the Shimura-Shintani-Waldspurger lift; and the dotted arrows are constructed in this paper:
\[\xymatrix{{f_\infty} \ar@{|->}[r]^{} \ar@{|->}[d]_{\Lambda-\text{adic JL}} & f_{k} \ar@{|->}[d]^{\text{JL}} \\
\Phi \ar@{|->}[r]^{\rho_k} \ar@{|..>}[d]_{\Lambda-\text{adic SSW}} & \phi_k \ar@{|->}[d]^{\text{SSW}} \\
\Theta  \ar@{|..>}[r]^{} & h_k 
}\]
More precisely, as a particular case of our main result, Theorem \ref{main}, we get the following

\begin{thm} \label{thm-intro} There exists a $p$-adic neighborhood $U_0$ 
of $k_0$ in $\Hom(\Z_p^\times,\Z_p^\times)$, $p$-adic periods $\Omega_k$ for $k\in U_0\cap\Z^{\geq 2}$  
and a formal expansion 
\[\Theta=\sum_{\xi\geq 1} a_\xi q^\xi\] 
with coefficients $a_\xi$ in the ring of $\C_p$-valued functions on $U_0$, 
such that for all $k\in U_0\cap \Z^{\geq 2}$ we have 
\[\Theta(k)=\Omega_k\cdot h_k .\] Further, $\Omega_{k_0}\neq 0$. \end{thm}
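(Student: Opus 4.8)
The plan is to build $\Theta$ one Fourier coefficient at a time, by pairing the universal quaternionic class $\Phi\in H^1(\Gamma_0,\D)$ against Shintani cycles and then taking weight-varying moments of the resulting measures; the interpolation property will follow from the compatibility of the specialization maps $\rho_k$ with these pairings. Throughout we are in the trivial-character case, the instance of Theorem~\ref{main} recorded here. First I would make explicit, following Shimura's description of the SSW lift \cite{Sh}, how the Fourier coefficients of $h_k=\mathrm{SSW}(\phi_k)$ are cut out cohomologically: for each $\xi\geq1$ the coefficient indexed by $\xi$ is a sum of period integrals of $f_k^{\rm JL}$ over the geodesic/CM cycles attached to the ($\Gamma$-classes of) binary quadratic forms — equivalently, optimal embeddings into the Eichler order $R$ — of the discriminant $d_\xi$ determined by $\xi$. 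In cohomological terms there is a Shintani class $c_\xi$ in the homology of $\Gamma$ such that this coefficient equals $\lambda(k)\cdot\langle\phi_k,c_\xi\rangle$, where $\langle\,,\rangle$ is the homology–cohomology pairing and $\lambda(k)$ is the $\xi$-independent normalizing constant of Shimura's formula (its $\xi$-independence being exactly the statement that these periods assemble into a single modular form).

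Next I would deform this pairing along the family. Recall that $\rho_k$ is induced by the weight-$k$ moment map $m_k\colon\D\to F_k$, $\mu\mapsto\int_{\Z_p^2}P_k\,d\mu$ (integration against the homogeneous weight-$k$ functional), and that $\rho_k(\Phi)=\Omega_k\cdot\phi_k$ for a period $\Omega_k$ supplied by the interpolation result of \cite{LV1}. I then set $\mu_\xi:=\langle\mathrm{res}_\Gamma\Phi,c_\xi\rangle\in\D$, the measure obtained by pairing the restriction of $\Phi$ against the Shintani class, and define $a_\xi$ as the function on $U_0$ whose value at $\kappa$ is the weight-$\kappa$ moment $\int_{\Z_p^2}P_\kappa\,d\mu_\xi$. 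Since $\mu_\xi$ is a bounded measure on the compact space of primitive vectors in $\Z_p^2$ and $\kappa\mapsto P_\kappa$ varies rigid-analytically, each $a_\xi$ is a $\C_p$-valued function on a neighborhood $U_0$ of $k_0$; put $\Theta:=\sum_{\xi\geq1}a_\xi q^\xi$.

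The interpolation formula is then formal: for integral $k\in U_0\cap\Z^{\geq2}$ the compatibility of $m_k$ with the pairing gives $a_\xi(k)=m_k\langle\mathrm{res}_\Gamma\Phi,c_\xi\rangle=\langle\rho_k(\Phi),c_\xi\rangle=\Omega_k\cdot\langle\phi_k,c_\xi\rangle$, which by the previous paragraph is $\Omega_k\lambda(k)^{-1}$ times the $\xi$-th Fourier coefficient of $h_k$; absorbing the $\xi$-independent factor $\lambda(k)^{-1}$ into $\Omega_k$ yields $\Theta(k)=\Omega_k\cdot h_k$ for all $k\in U_0\cap\Z^{\geq2}$.

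I expect two steps to carry the real weight. The serious one is justifying the displayed interpolation identity rigorously, i.e. identifying the $p$-adic moment $\int_{\Z_p^2}P_k\,d\mu_\xi$ with the archimedean period computing the $\xi$-th Fourier coefficient in \cite{Sh}; this is a comparison of $p$-adic and complex periods that relies on Shimura's explicit formula together with the precise normalization of $\Phi$ from \cite{LV1}, and it is here that the $\xi$-independence of $\Omega_k$ must be checked. The remaining point, $\Omega_{k_0}\neq0$, expresses the non-degeneracy of the family at the central weight: by the control theorem of \cite{LV1}, $\rho_{k_0}$ sends $\Phi$ to a nonzero multiple of $\phi_{k_0}$ — the $p$-stabilized newform $f_0$ being non-critical at $k_0$ — while the elementary constant $\lambda(k_0)$ of Shimura's formula does not vanish; hence $\Omega_{k_0}\neq0$.
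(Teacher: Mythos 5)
Your overall strategy---pair the Longo--Vigni class $\Phi$ against Shintani cycles, take weight-varying moments of the resulting measures to define the coefficients $a_\xi$, and get interpolation from compatibility of moments with $\rho_k$ plus the control theorem---is the same as the paper's (it is Stevens's argument transported to the quaternionic setting, which is what \S 3.4 does, with Theorem~\ref{thm-intro} then deduced from the general Theorem~\ref{main}). But your formalization of the key construction has a genuine gap. You define a \emph{single} measure $\mu_\xi=\langle\mathrm{res}_\Gamma\Phi,c_\xi\rangle\in\D$ and integrate a \emph{universal} weight-$k$ functional $P_k$ against it. This cannot compute the Fourier coefficient: by Shimura's formula as packaged in \eqref{SS1}, the $\xi$-th coefficient of $h_k$ is $\sum_{\mathcal C}\frac{\eta(\alpha_{\mathcal C})}{\gert_{\alpha_{\mathcal C}}}\,\phi_k^{+}\big(\tilde Q_{\alpha_{\mathcal C}}^{\,k-1}\big)$, where the polynomial $\tilde Q_{\alpha_{\mathcal C}}^{\,k-1}$ varies with the conjugacy class $\mathcal C$; no single functional applied to the summed measure $\sum_{\mathcal C}\Phi_{\gamma_{\alpha_{\mathcal C}}}$ can reproduce these class-dependent evaluations. (Moreover, evaluation of a cocycle at $\gamma_{\alpha_{\mathcal C}}$ is not well defined as an element of $\D$, only modulo coboundaries.) The paper's fix is exactly the map $j_\alpha$: push each measure $\Phi_{\gamma_{\alpha_{\mathcal C}}}$ forward under \emph{its own} quadratic form $\tilde Q_{\alpha_{\mathcal C}}$ to a measure on $\Z_p^\times$, i.e. an element of $\mathcal O[\![\Z_p^\times]\!]$ (well defined on cohomology because $\gamma_\alpha$ fixes $Q_\alpha$, so coboundary terms die), sum over classes, and only then take universal moments; this is the definition of $\theta_\xi$ and the content of Lemma~\ref{lemma3.2}, and it forces the metaplectic twist $\sigma:t\mapsto t^2$, since $j_\alpha(\lambda\cdot\nu)=[\lambda^2]\cdot j_\alpha(\nu)$. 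Note also that your closing paragraph mislocates the ``serious step'': no archimedean-versus-$p$-adic comparison remains beyond what you already invoked ($\rho_k(\Phi)=\Omega_k\cdot\phi_k$ from Theorem~\ref{thm-LV}, which yields the $\xi$-independence of $\Omega_k$ for free, since it is an identity of cohomology classes); what is missing is the algebraic packaging just described.

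The second gap concerns the parameter space. The control theorem (Theorem~\ref{thm-LV}) indexes specializations by arithmetic points $\kappa$ of $\mathcal X=\Hom^{\rm cont}_{\mathcal O\text{-alg}}(\mathcal R,\bar\Q_p)$, which is finite over weight space, not by weight space itself; accordingly $\Phi$ lives in $\W^\ord_{\mathcal R}$ and the coefficients $\theta_\xi(\Phi)$ are elements of a finite $\Lambda$-algebra, so a priori your $a_\xi$ are functions on a neighborhood $\mathcal U_0\subseteq\mathcal X$, and ``$\rho_k$'', ``$\Omega_k$'' for an integer $k\in U_0\subseteq\Hom(\Z_p^\times,\Z_p^\times)$ are not yet defined. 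To descend to weight space one must single out a branch of $\varpi:\mathcal X\rightarrow\mathcal X(\Lambda)$ through $\kappa_0$; the paper does this via Hida's theorem that $\mathcal R/\Lambda$ is unramified at the relevant point, producing a local section $\varsigma:U_0\rightarrow\mathcal U_0$ with $\varsigma(k)$ of signature $(1,k)$, and then sets $\Theta:=\Theta(\Phi)\circ\varsigma$ and $\Omega_k:=\Omega_{\varsigma(k)}$. This descent is the entire content of the paper's derivation of Theorem~\ref{thm-intro} from Theorem~\ref{main}, and it is absent from your proposal; without it the statement as formulated (coefficients as functions on $U_0$, interpolation at integer $k$) is not established.
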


\section{Shintani integrals and Fourier coefficients of half-integral weight modular forms}\label{sec2}

We express the Fourier coefficients of half-integral weight modular forms 
in terms of period integrals, thus allowing a cohomological interpretation which is key to the production of the $\Lambda$-adic version of the Shimura-Shintani-Waldspurger correspondence.
For the quaternionic Shimura-Shintani-Waldspurger correspondence of interest to us (see \cite{Pr}, \cite{Wa}), the period integrals expressing the values of the Fourier coefficients have been computed generally by Prasanna in \cite{Pr2}. 
 
\subsection{The Shimura-Shintani-Waldspurger lifting}\label{sec-SSL} 

Let $4M$ be a positive integer, $2k$ an even non-negative integer 
and $\chi$ a Dirichlet character modulo $4M$ such that $\chi(-1) = 1$.
Recall that the space of half-integral weight modular forms $S_{k+1/2}(4M, \chi)$ consists of holomorphic cuspidal functions $h$ on the upper-half place $\gerH$ such that 
\[ h( \gamma(z)) = j^{1/2} (\gamma, z)^{2k+1} \chi(d) h(z) ,\]
\noindent for all $\gamma = \smallmat {a}{b}{c}{d} \in \Gamma_0(4M)$, where ${j^{1/2}}(\gamma,z)$ is the standard square root of the usual automorphy factor $j(\gamma,z)$ (cf. \cite[2.3]{Pr}).

To any quaternionic integral weight modular form we may associate a half-integral weight modular form following Shimura's work \cite{Sh}, 
as we will describe below.   

Fix an odd square free integer $N$ and a factorization $N=M\cdot D$ into coprime integers 
such that $D>1$ is a product of an even number of distinct primes. 
Fix a Dirichlet character $\psi$ modulo $M$ and a positive even integer $2k$. Suppose that 
\[\psi(-1)=(-1)^k.\]
Define the Dirichlet character $\chi$ modulo $4N$ by 
\[\chi(x):=\psi(x)\left(\frac{-1}{x}\right)^{k}.\]

Let $B$ be an indefinite quaternion algebra over $\Q$ of discriminant
$D$. Fix a maximal order $\mathcal O_B$ of $B$.  
For every prime $\ell | M$, choose an isomorphism 
\[i_\ell:B\otimes_\Q\Q_\ell\simeq\M_2(\Q_\ell)\] such that
$i_\ell(\mathcal O_B\otimes_\Z\Z_\ell)=\M_2(\Z_\ell)$. Let 
$R\subseteq \mathcal O_B$ be the Eichler order of $B$ of 
level $M$ defined by requiring that $i_\ell(R\otimes_\Z\Z_\ell)$ is the suborder of $\M_2(\Z_\ell)$ 
of upper triangular matrices modulo $\ell$ for all $\ell | M$.
Let $\Gamma$ 
denote the subgroup of the group $R^\times_1$ of norm $1$ elements in $R^\times$
consisting of those $\gamma$ such that $i_\ell(\gamma)\equiv \smallmat1*01$ mod $\ell$ for all $\ell| M$. 
We denote by $S_{2k}(\Gamma)$ the $\C$-vector space of weight $2k$ modular forms on $\Gamma$,  
and by $S_{2k}(\Gamma,\psi^2)$ the subspace of $S_{2k}(\Gamma)$ consisting of forms
having character $\psi^2$ under the action of $R_1^\times$. 
Fix a Hecke eigenform \[f\in S_{2k}(\Gamma,\psi^2)\] as in \cite[Section 3]{Sh}.

Let $V$ denote the $\Q$-subspace of $B$ consisting of elements with 
trace equal to zero. For any $v\in V$, which we view as a trace zero 
matrix in $\M_2(\R)$ (after fixing an isomorphism $i_\infty:B\otimes\R\simeq\M_2(\R)$), set 
\[G_v:=\{\gamma\in\SL_2(\R)|\, \gamma^{-1}v\gamma=v\}\] and put $\Gamma_v:=G_v\cap\Gamma$. 
One can show that there exists an isomorphism $\omega:\R^\times\overset\sim\rightarrow {G_v}$ defined by 
$\omega(s)=\beta^{-1}\smallmat s00{s^{-1}}\beta$, for some $\beta\in\SL_2(\R)$.
Let ${\gert}_v$ be the order of $\Gamma_v \cap \left\{ \pm 1 \right\}$ and let $\gamma_v$ be an element of 
$\Gamma_v$ which generates $\Gamma_v \left\{ \pm 1 \right\} / \left\{ \pm 1 \right\}$. 
Changing $\gamma_v$ to $\gamma^{-1}_v$ if necessary, we may assume 
$\gamma_v=\omega(t)$ with $t>0$. 
Define $V^*$ to be the $\Q$-subspace of $V$ 
consisting of elements with strictly negative norm. 
For any $\alpha=\smallmat a{b}{c}{-a}\in V^*$ and $z\in\mathcal H$, define the quadratic form  
\[Q_\alpha(z):=cz^2-2az-b.\] Fix $\tau\in\mathcal H$ and set 
\[ {P(f,\alpha,\Gamma) := - \big(2 (-{\rm nr}(\alpha))^{1/2} / {\gert}_\alpha\big) \int_\tau^{\gamma_\alpha(\tau)} Q_\alpha(z)^{k-1}f(z) dz} \]
where ${\rm nr}:B\rightarrow\Q$ is the norm map.
By \cite[Lemma 2.1]{Sh},
the integral is independent on the choice $\tau$, which justifies the notation.

\begin{rmk}
The definition of  
$P(f,\alpha,\Gamma)$ given in \cite[(2.5)]{Sh} looks different: the above expression can be derived 
as in \cite[page 629]{Sh} by means of \cite[(2.20) and (2.22)]{Sh}.   
\end{rmk}

Let $R(\Gamma)$ denote the set of equivalence classes 
of $V^*$ under the action of $\Gamma$ by conjugation.  By \cite[(2.6)]{Sh}, $P(f,\alpha,\Gamma)$ only depends on the 
conjugacy class of $\alpha$, and thus, for $\mathcal C\in R(\Gamma )$, we may 
define $P(f,\calC,\Gamma):=P(f,\alpha,\Gamma)$
for any choice of $\alpha\in\mathcal C$.  Also,
$q(\calC):=-{\rm nr}(\alpha)$ for any $\alpha\in\mathcal C$. 

Define $\mathcal O'_B$ to be the maximal order in $B$ such that $\mathcal O'_B\otimes_\Z\Z_\ell\simeq
\mathcal O_B\otimes_\Z\Z_\ell$ for all $\ell\nmid M$ and $\mathcal O'_B\otimes_\Z\Z_\ell$ is equal 
to the local order of $B\otimes_\Q\Q_\ell$ consisting of elements $\gamma$ such that $i_\ell(\gamma)=
\smallmat a{b/M}{cM}{d}$ with $a,b,c,d\in\Z_\ell$, for all $\ell| M$. 
Given $\alpha\in\mathcal O_B'$, we can find an integer $b_\alpha$ such that
\begin{equation}\label{eq-b}
i_\ell(\alpha)\equiv \mat {*}{b_\alpha/M}{*}{*} \mod i_\ell(R\otimes_\Z\Z_\ell), \quad\forall\ell| M.\end{equation}
Define a locally constant function $\eta_\psi$ on $V$ by $\eta_\psi(\alpha)=\psi(b_\alpha)$ if $\alpha\in \mathcal O_B'\cap V$ 
and $\eta(\alpha)=0$ otherwise, with $\psi(a)=0$ if $(a,M)\neq 1$ 
(for the definition of {locally constant functions on $V$} 
in this context, we refer to \cite[p. 611]{Sh}). 

For any $\mathcal C\in R(\Gamma )$, 
fix $\alpha_\mathcal C\in \mathcal C$. For any integer $\xi\geq 1$, define 
\[\
a_\xi(\tilde h):= \big(2\mu(\Gamma\backslash\mathfrak H)\big)^{-1}\cdot\sum_{\mathcal C\in R(\Gamma), q(\calC)=\xi} \eta_\psi(\alpha_\calC)
\xi^{-1/2} P(f,\calC,\Gamma).\] 
Then, by \cite[Theorem 3.1]{Sh}, 
\[
\tilde h:=\sum_{\xi\geq 1}a_\xi(\tilde h)q^\xi \in S_{k+1/2}(4N,\chi)\]
is called the {Shimura-Shintani-Waldspurger lifting} of $f$. 
 
\subsection{Cohomological interpretation} \label{sec2.3}
We introduce necessary notation to define the action of the Hecke action on cohomology groups; for details, 
see \cite[\S 2.1]{LV1}. If $G$ is a subgroup of $B^\times$ and $S$ a subsemigroup of $B^\times$ such 
that $(G,S)$ is an Hecke pair, we let $\mathcal H(G,S)$ 
denote the Hecke algebra corresponding to $(G,S)$, whose elements are written as $T(s)=GsG=\coprod_iG{s_i}$ for $s,s_i\in S$
(finite disjoint union). For any $s\in S$, let $s^*:={\rm norm}(s)s^{-1}$ and denote by $S^*$ the set of elements of the form 
$s^*$ for $s\in S$. For any $\Z[S^*]$-module $M$ we let $T(s)$ act on $H^1(G,M)$ at the level of cochains $c\in Z^1(G,M)$ 
by the formula $(c|T(s))(\gamma)=\sum_i s_i^* c(t_i(\gamma))$, where $t_i(\gamma)$ are defined by the equations 
$Gs_i\gamma=Gs_j$ and $s_i\gamma=t_i(\gamma)s_j$. In the following, we will consider the case of 
$G=\Gamma$ and 
\[\text{$S=\{s\in B^\times| i_\ell(s)$ is congruent to $\smallmat{1}{*}{0}{*}$ mod $\ell$ for all $\ell| M$\}}.\]

For any field $L$ and any integer $n\geq 0$, let  $V_{n}(L)$ denote the $L$-dual of the $L$-vector space 
$\mathcal P_{n}(L)$
of homogeneous 
polynomials in 2 variables of degree $n$.  We let $\M_2(L)$ act from the right on $P(x,y)$ as $P|\gamma(x,y):=P(\gamma(x,y))$, where for $\gamma=\smallmat abcd$ we put 
\[\gamma(x,y):=(ax+yb,cx+dy).\] This also equips $V_n(L)$ with a left action 
by $\gamma\cdot\varphi(P):=\varphi(P|\gamma)$.  To simplify the notation, we will write $P(z)$ for $P(z,1)$. 

Let $F$ denote the finite extension of $\Q$ generated by the eigenvalues of the Hecke action on $f$. 
For any field $K$ containing $F$, set 
\[ \W_f(K):=H^1\bigl(\Gamma,V_{k-2}(K)\bigr)^{f}\] 
where the superscript $f$ denotes the subspace on which the Hecke algebra acts via the character associated with $f$. 
Also, for any sign $\pm$, let $\W_f^\pm(K)$ denote the $\pm$-eigenspace for the action of the archimedean involution
$\iota$. Remember that $\iota$ is defined by choosing an element $\omega_\infty$ of norm $-1$ in $R^\times$ such that 
such that $i_\ell(\omega_\infty)\equiv \smallmat 100{-1}$ mod $M$ for all primes $\ell| M$ and 
then setting $\iota:=T(w_\infty)$ 
(see \cite[\S 2.1]{LV1}).
Then $\W_f^\pm(K)$ is one dimensional (see, {e.g.}, \cite[Proposition 2.2]{LV1}); fix a generator $\phi^\pm_f$ 
of $\W_f^\pm(F)$. 

To explicitly describe $\phi_f^\pm$, let us introduce some more notation. 
Define 
\[f|\omega_\infty(z):=(Cz+D)^{-k/2}\overline{f(\omega_\infty(\bar z))}\] 
where $i_\infty(\omega_\infty)=\smallmat ABCD$.
Then $f|\omega_\infty\in S_{2k}(\Gamma)$ as well.  
If the eigenvalues of the Hecke action on $f$ are real, then we may assume, after multiplying $f$ 
by a scalar, that 
$f|\omega_\infty=f$ (see \cite[p. 627]{Sh} or \cite[Lemma 4.15]{LV2}). In general,
let  
 $I(f)$ denote the class in 
$H^1(\Gamma,V_{k-2}(\C))$ represented by the cocycle 
\[\gamma\longmapsto \left[P\mapsto I_\gamma(f)(P):=\int_{\tau}^{\gamma(\tau)}f(z)P(z)dz\right]\]
for any $\tau\in \mathcal H$ (the corresponding class is independent on the choice of $\tau$). 
With this notation, 
\[ P(f,\alpha,\Gamma)=- \big(2 (-{\rm nr}(\alpha))^{1/2} / {\gert}_\alpha\big) \cdot I_{\gamma_{\alpha_\mathcal C}}(f)\big(Q_{\alpha_\mathcal C}(z)^{k-1}\big)
.\]
Denote by $I^\pm(f):=(1/2)\cdot I(f)\pm (1/2)\cdot I(f)|\omega_\infty$, the projection of $I(f)$ 
to the eigenspaces for the action of $\omega_\infty$. Then $I(f)=I^+(f)+I^-(f)$
and $I_f^\pm=\Omega_f^\pm\cdot\phi_f^\pm$, for some $\Omega_f^\pm\in\C^\times$. 

Given $\alpha\in V^*$ of norm $-\xi$, 
put $\alpha':=\omega_\infty^{-1}\alpha\omega_\infty$. 
By \cite[4.19]{Sh}, we have 
\[{\eta(\alpha)}{\xi^{-1/2}}P(f,\alpha,\Gamma)+{\eta(\alpha')}{\xi^{-1/2}}P(f,\alpha',\Gamma)=
-{\eta(\alpha)}\cdot{{\gert}_\alpha}^{-1}\cdot I^+_{\gamma_{\alpha}}\big(Q_{\alpha_\mathcal C}(z)^{k-1}\big).
\] We then have 
\[a_\xi(\tilde h)=\sum_{\mathcal C\in R_2(\Gamma), q(\calC)=\xi} \frac{-\eta_\psi(\alpha_\calC)}
{2\mu(\Gamma\backslash\mathcal H)\cdot \gert_{\alpha_\mathcal C}}
\cdot I^+_{\gamma_{\alpha_\mathcal C}}\big(Q_{\alpha_\mathcal C}(z)^{k-1}\big).
\]

We close this section by choosing a suitable multiple of $h$ which will be the object of the next section. 
Given $Q_\alpha(z)=cz^2-2az-b$ as above, 
with $\alpha$ in $V^*$, define $\tilde Q_\alpha(z):=M \cdot Q_\alpha(z)$.  Then, clearly,  
$I^\pm(f)(\tilde Q_{\alpha_\mathcal C}(z)^{k-1})$ is equal to $M^{k-1}I^\pm(f)(Q_{\alpha_\mathcal C}(z)^{k-1})$. We thus normalize the Fourier coefficients by setting 
\begin{equation}\label{SS1}a_\xi(h):=-\frac{a_\xi(\tilde h)\cdot M^{k-1}\cdot2\mu(\Gamma\backslash\mathcal H)}
{\Omega_f^{+}}=
\sum_{\mathcal C\in R(\Gamma), q(\calC)=\xi} \frac{\eta_\psi(\alpha_\calC)}{\gert_{\alpha_\calC}}
\cdot\phi_f^{+}\big(\tilde Q_{\alpha_\mathcal C}(z)^{k-1}\big).\end{equation}
So \begin{equation}\label{SS2}h:=\sum_{\xi\geq 1} a_\xi(h)q^\xi\end{equation}
belongs to $S_{k+1/2}(4N,\chi)$ and is a non-zero multiple of $\tilde h$. 

\section{The $\Lambda$-adic Shimura-Shintani-Waldspurger correspondence}
At the heart of Stevens's proof lies the control theorem of Greenberg-Stevens, which has been worked out in the quaternionic setting by Longo--Vigni \cite{LV1}. 

Recall that $N\geq 1$ is a square free integer and fix a decomposition 
$N=M\cdot D$ where $D$ is a square free product of an even number of primes and $M$ is coprime to $D$. 
Let $p\nmid N$ be a prime number and fix an embedding $\bar\Q\hookrightarrow\bar\Q_p$.  

\subsection{The Hida Hecke algebra}\label{sec-Hida}
Fix an ordinary $p$-stabilized newform 
\begin{equation}\label{f}
f_0\in S_{k_0}\big(\Gamma_1(Mp^{r_0})\cap\Gamma_0(D),\epsilon_0\big)\end{equation} of level $\Gamma_1(Mp^{r_0})\cap\Gamma_0(D)$, Dirichlet character $\epsilon_0$ and weight $k_0$, and write $\mathcal O$ for the ring of integers of the field 
generated over $\Q_p$ by the Fourier coefficients of $f_0$. 

Let $\Lambda$ (respectively, ${{\mathcal O[\![\Z_p^\times]\!]}}$) denote the Iwasawa algebra of $W:=1+p\Z_p$ (respectively, $\Z_p^\times$) 
with coefficients in $\mathcal O$. We denote group-like elements in $\Lambda$ and ${{\mathcal O[\![\Z_p^\times]\!]}}$ as 
$[t]$. 
Let $\mathfrak h_\infty^\ord$ denote the $p$-ordinary Hida Hecke algebra with coefficients in $\mathcal O$ 
of tame level $\Gamma_1(N)$. Denote by
$\mathcal L:={\rm Frac}(\Lambda)$ the fraction field of $\Lambda$. 
Let $\mathcal R$ denote the integral closure of $\Lambda$ in the primitive component $\mathcal K$ of 
${\mathfrak h_\infty^{\ord}}\otimes_\Lambda\mathcal L$ corresponding to $f_0$. It is well 
known that the $\Lambda$-algebra $\mathcal R$ is finitely generated as $\Lambda$-module. 

Denote by $\mathcal X $ the $\mathcal O$-module
$\Hom^{\rm cont}_{\mathcal O\text{-alg}}(\mathcal R,\bar\Q_p)$ 
of continuous homomorphisms of $\mathcal O$-algebras.  
Let $\mathcal X^{\rm arith} $ the set of arithmetic homomorphisms in $\mathcal X $, 
defined in \cite[\S 2.2]{LV1} by requiring that the composition 
\[W\longmono\Lambda\overset\kappa\longrightarrow\bar\Q_p\]
has the form $\gamma\mapsto\psi_\kappa(\gamma)\gamma^{n_\kappa}$ with $n_\kappa=k_\kappa-2$ 
for an
integer $k_\kappa\geq 2$ (called the {weight of $\kappa$}) and a finite order character 
$\psi_\kappa:W\rightarrow\bar\Q_p$ (called the {wild character of $\kappa$}). Denote by 
$r_\kappa$ the smallest among the positive integers $t$ such that $1+p^t\Z_p\subseteq\ker(\psi_\kappa)$. 
For any $\kappa\in\mathcal X^{\rm arith} $, let $P_\kappa$ denote the kernel of $\kappa$ and $\mathcal R_{P_\kappa}$ 
the localization of $\mathcal R$ at $\kappa$. The field $F_\kappa:=\mathcal R_{P_\kappa}/P_\kappa\mathcal R_{P_\kappa}$ 
is a finite extension of ${\rm Frac}(\mathcal O)$. Further, by duality, $\kappa$ corresponds to a normalized eigenform 
\[f_\kappa\in S_{k_\kappa}\big(\Gamma_0(Np^{r_\kappa}),\epsilon_\kappa\big)\]
for a Dirichlet character $\epsilon_\kappa:(\Z/Np^{r_\kappa}\Z)^\times\rightarrow\bar\Q_p$. 
More precisely, if we write $\psi_\mathcal R$ for the character of $\mathcal R$, defined as in 
\cite[Terminology p. 555]{Hida-Galois}, and we let $\omega$ denote the 
TeichmŸller character, we have 
$\epsilon_\kappa:=\psi_\kappa\cdot\psi_\mathcal R\cdot \omega^{-n_\kappa}$
(see \cite[Cor. 1.6]{Hida-Galois}). We call $(\epsilon_\kappa,k_\kappa)$ the  signature of $\kappa$. 
We let $\kappa_0$ denote the 
arithmetic character associated with $f_0$, so $f_0=f_{\kappa_0}$, $k_0=k_{\kappa_0}$, 
$\epsilon_0=\epsilon_{\kappa_0}$, 
and $r_0=r_{\kappa_0}$. 
The eigenvalues of $f_\kappa$ under the action of the Hecke operators 
$T_n$ ($n\geq 1$ an integer) belong to $F_\kappa$.  Actually, one can 
show that $f_\kappa$ is a $p$-stabilized newform on $\Gamma_1(Mp^{r_\kappa})\cap\Gamma_0(D)$. 

Let $\Lambda_N$ denote the Iwasawa algebra of $\Z_p^\times\times (\Z/N\Z)^\times$ with coefficients in $\mathcal O$. 
To simplify the notation, define $\Delta:= (\Z/Np\Z)^\times$. We have a canonical isomorphism of rings $\Lambda_N\simeq \Lambda[\Delta]$, which makes $\Lambda_N$ a $\Lambda$-algebra, finitely generated as $\Lambda$-module. 
Define the tensor product of $\Lambda$-algebras  
\[{\mathcal R}_N:=\mathcal R\otimes_\Lambda\Lambda_N,\] which is again a $\Lambda$-algebra (resp. $\Lambda_N$-algebra)
finitely generated as a $\Lambda$-module, (resp. as a $\Lambda_N$-module). 
One easily checks that 
there is a canonical isomorphism of $\Lambda$-algebras
\[\mathcal R_N\simeq
\mathcal R[\Delta]\] (where $\Lambda$ acts on $\mathcal R$); this is also an isomorphism 
of $\Lambda_N$-algebras, when we let $\Lambda_N\simeq\Lambda[\Delta]$ 
act on $\mathcal R[\Delta]$ in the obvious way. 

We can extend any $\kappa\in\mathcal X^{\rm arith} $ to a continuous $\mathcal O$-algebra morphism 
\[\kappa_N:{\mathcal R}_N\longrightarrow\bar\Q_p\] setting 
\[\kappa_N\left(\sum_{i=1}^nr_i\cdot\delta_i\right):=
\sum_{i=1}^n\kappa(r_i)\cdot\psi_\mathcal R(\delta_i)\] for $r_i\in\mathcal R$ and $\delta_i\in\Delta$.
Therefore, $\kappa_N$ restricted to $\Z_p^\times$ is the character $t\mapsto\epsilon_\kappa(t)t^{n_\kappa}$. 
If we denote by $\mathcal X_N $ the $\mathcal O$-module 
of continuous $\mathcal O$-algebra homomorphisms from $\mathcal R_N$ to $\bar\Q_p$, the above correspondence sets 
up an injective map $\mathcal X^{\rm arith} \hookrightarrow\mathcal X_N $. 
Let $\mathcal X_N^{\rm arith} $ denote the 
image of $\mathcal X^{\rm arith} $ under this map. For $\kappa_N\in\mathcal X^{\rm arith}_N $, we define the  signature 
of $\kappa_N$ to be that of the corresponding $\kappa$. 

\subsection{The control theorem in the quaternionic setting} \label{CT}
Recall that $B/\Q$ is a quaternion algebra of discriminant $D$. 
Fix an auxiliary real quadratic field 
$F$ such that all primes dividing $D$ are inert in $F$ and all primes dividing $Mp$ 
are split in $F$, and an isomorphism $i_F:B\otimes_\Q F\simeq \M_2(F)$. Let $\mathcal O_B$ 
denote the maximal order of $B$ obtained by taking the intersection of $B$ with 
$\M_2(\mathcal O_F)$, where $\mathcal O_F$ is the ring of integers of $F$. More precisely, 
define 
\[\mathcal O_B:=\iota^{-1}\big( i_F^{-1}\big(i_F(B\otimes 1)\cap\M_2(\mathcal O_F)\big)\big)\] 
where $\iota:B\hookrightarrow B\otimes_\Q F$ is the inclusion defined by $b\mapsto b\otimes 1$. This is 
a maximal order in $B$ because $i_F(B\otimes 1)\cap\M_2(\mathcal O_F)$ is a maximal order 
in $i_F(B\otimes 1)$. In particular, $i_F$ and our fixed embedding of $\bar\Q$ into $\bar\Q_p$ induce 
an isomorphism \[i_p: B\otimes_\Q\Q_p\simeq\M_2(\Q_p)\]
such that $i_p(\mathcal O_B\otimes_\Z\Z_p)=\M_2(\Z_p)$. For any prime $\ell| M$, also choose an embedding $\bar\Q\hookrightarrow\bar\Q_\ell$ which, 
composed with $i_F$, yields isomorphisms 
\[i_\ell: B\otimes_\Q\Q_\ell\simeq\M_2(\Q_\ell)\] such that $i_p(\mathcal O_B\otimes_\Z\Z_\ell)=\M_2(\Z_\ell)$.
Define an Eichler order $R\subseteq\mathcal O_B$ of level $M$ by requiring that for all primes 
$\ell| M$ the image of $R\otimes _\Z\Z_\ell$ via $i_\ell$ consists of upper triangular matrices modulo $\ell$. 
For any $r\geq 0$, let $\Gamma_r$ denote the subgroup of the group $R_1^\times$ of norm-one elements in 
$R$ consisting of those $\gamma$ such that $i_\ell(\gamma)=\smallmat abcd$ with $c\equiv 0\mod Mp^r$ 
and $a\equiv d\equiv  1\mod Mp^r$, for all primes $\ell| Mp$. 
To conclude this list of notation and definitions, fix an embedding 
$F\hookrightarrow \R$ and let \[i_\infty:B\otimes_\Q\R\simeq\M_2(\R)\] be the induced isomorphism. 

Let $\Y:=\Z_p^2$ and denote by $\X$ the set of primitive vectors in $\Y$. Let $\D$ denote the $\mathcal O$-module of 
$\mathcal O$-valued 
measures on $\Y$ which are supported on $\X$. 
 Note that 
$\M_2(\Z_p)$ acts on $\Y$ by left multiplication; this induces an action of $\M_2(\Z_p)$ on the $\mathcal O$-module of 
$\mathcal O$-valued measures on $\Y$, which induces an action on $\D$. 
The group $R^\times$ acts on $\D$ via $i_p$. In particular, we may define the group:   
\[ \W:=H^1(\Gamma_0,\D).\] 
Then $\D$ has a canonical structure of ${{\mathcal O[\![\Z_p^\times]\!]}}$-module, as well as $\mathfrak h_\infty^\ord$-action, 
as described in \cite[\S 2.4]{LV1}. In particular, let us recall that, for any $[t]\in{{\mathcal O[\![\Z_p^\times]\!]}}$, we have 
\[\int_{\X}\varphi(x,y)d\big([t]\cdot\nu\big)=\int_\X\varphi(tx,ty)d\nu,\] for any locally constant function 
$\varphi$ on $\X$.

For any $\kappa\in\mathcal X^{\rm arith} $ and any sign $\pm \in \left\{ -,+\right\}$, set 
 \[ \W_\kappa^\pm:=\W_{f_\kappa^{\rm JL}}^\pm(F_\kappa)=H^1\bigl(\Gamma_{r_\kappa},V_{n_\kappa}(F_\kappa)\bigr)^{f_\kappa,\pm}\] where $f_\kappa^{\rm JL}$ is any Jacquet-Langlands lift of $f_\kappa$ to $\Gamma_{r_\kappa}$; 
recall that the superscript $f_\kappa$ 
denotes the subspace on which the Hecke algebra acts via the character associated with $f_\kappa$, and the superscript 
$\pm$ denotes the $\pm$-eigenspace for the action of the archimedean involution
$\iota$.
Also, recall that $\W_\kappa^\pm$ is one dimensional and fix a generator $\phi^\pm_{\kappa}$ of it. 

We may define {specialization maps}  
\[ \rho_{\kappa}:\D\longrightarrow V_{n_\kappa}(F_\kappa) \]
by the formula
\begin{equation}\label{rho-kappa}
\rho_{\kappa}(\nu)(P):=\int_{\Z_p\times\Z_p^\times}\epsilon_\kappa(y)P(x,y)d\nu\end{equation}
which induces (see \cite[\S 2.5]{LV1}) a map: 
\[\rho_\kappa:\W^\ord\longrightarrow \W^\ord_{\kappa}.\] 
Here $\W^\ord$ and $\W^\ord_\kappa$ denote the {ordinary submodules} of $\W$ and $\W_\kappa$, respectively, 
defined as in \cite[Definition 2.2]{GS} (see also \cite[\S3.5]{LV1}).
We also let $\W_\mathcal R:=\W\otimes_\Lambda\mathcal R$, and extend the above map $\rho_\kappa$ to a map
\[\rho_\kappa:\W_\mathcal R^\ord\longrightarrow \W^\ord_\kappa\] 
by setting $\rho_\kappa(x\otimes r):=\rho_\kappa(x)\cdot \kappa(r)$.

\begin{thm} \label{thm-LV}
There exists a $p$-adic neighborhood $\mathcal U_0 $ of $\kappa_0$ in $\mathcal X $,  
elements $\Phi^\pm$ in $\W^\ord_\mathcal R$ and choices of $p$-adic periods ${\Omega_\kappa^\pm\in F_\kappa}$ 
for $\kappa\in \mathcal U_0 \cap \mathcal X^{\rm arith} $ 
such that, for all $\kappa\in \mathcal U_0\cap \mathcal X^{\rm arith} $, we have \[ \rho_\kappa(\Phi^\pm) = \Omega_\kappa^\pm \cdot\phi_\kappa^\pm\] 
and $\Omega_{\kappa_0}^\pm\neq 0$.\end{thm}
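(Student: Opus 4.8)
The plan is to deduce the statement from the quaternionic Greenberg--Stevens control theorem of \cite{LV1} together with a localization argument over the branch $\mathcal R$ of the Hida--Hecke algebra. Fix a sign $\pm$. Since the archimedean involution $\iota$ acts on $\W^\ord_\mathcal R=\W^\ord\otimes_\Lambda\mathcal R$ and commutes both with the $\mathcal R$-action and with every specialization map $\rho_\kappa$, I may restrict to the $\pm$-eigenspace $M:=(\W^\ord_\mathcal R)^\pm$ and treat the two signs independently. For arithmetic $\kappa$, the extended map $\rho_\kappa(x\otimes r)=\rho_\kappa(x)\kappa(r)$ carries $M$ into the $f_\kappa$-isotypic $\pm$-part $\W^\pm_\kappa$, because the Hecke operators act on $M$ through $\mathcal R$ and $\kappa$ selects precisely the eigensystem of $f_\kappa$.

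First I would record the input of \cite{LV1}: for every $\kappa\in\mathcal X^{\rm arith}$ lying in a suitable neighborhood of $\kappa_0$, the map $\rho_\kappa$ of \eqref{rho-kappa} induces an isomorphism
\[M\otimes_{\mathcal R,\kappa}F_\kappa\;\overset{\sim}{\longrightarrow}\;\W^\pm_\kappa,\]
which is the content of the control theorem. As each $\W^\pm_\kappa$ is one-dimensional over $F_\kappa$, the module $M$ has one-dimensional fibres at all these arithmetic primes. Now $\mathcal R$ is the integral closure of $\Lambda=\mathcal O[\![X]\!]$ in $\mathcal K$, hence a normal domain of Krull dimension two, so it is regular in codimension one; since $P_{\kappa_0}$ is a height-one prime (its residue field $F_{\kappa_0}$ is a finite extension of ${\rm Frac}(\mathcal O)$), the localization $\mathcal R_{P_{\kappa_0}}$ is a discrete valuation ring. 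Granting that $M$ is torsion-free over $\mathcal R$ after localizing (its $\mathcal R$-torsion is annihilated by $\rho_\kappa$ whenever the annihilator avoids $P_\kappa$, and the generic fibre is nonzero because the family is nonzero), the localized module $M_{P_{\kappa_0}}$ is a finitely generated torsion-free module over a DVR with one-dimensional residue fibre, hence free of rank one. I would then pick a generator, multiply it by an element of $\mathcal R\smallsetminus P_{\kappa_0}$ to clear denominators, and thereby produce a genuine class $\Phi^\pm\in\W^\ord_\mathcal R$ which still generates $M_{P_{\kappa_0}}$; taking $\mathcal U_0$ to be the $p$-adic neighborhood of $\kappa_0$ on which this cleared denominator is nonvanishing ensures that $\Phi^\pm$ generates the fibre at each $\kappa\in\mathcal U_0\cap\mathcal X^{\rm arith}$.

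Finally I would define the periods and verify nonvanishing at $\kappa_0$. For $\kappa\in\mathcal U_0\cap\mathcal X^{\rm arith}$ both $\rho_\kappa(\Phi^\pm)$ and the fixed generator $\phi^\pm_\kappa$ lie in the one-dimensional space $\W^\pm_\kappa$, so there is a unique $\Omega^\pm_\kappa\in F_\kappa$ with $\rho_\kappa(\Phi^\pm)=\Omega^\pm_\kappa\cdot\phi^\pm_\kappa$, which is the asserted interpolation formula. Since $\Phi^\pm$ generates the free rank-one module $M_{P_{\kappa_0}}$ over the local ring $\mathcal R_{P_{\kappa_0}}$, Nakayama's lemma shows that its image in $M_{P_{\kappa_0}}/P_{\kappa_0}M_{P_{\kappa_0}}\cong\W^\pm_{\kappa_0}$ is again a generator, hence nonzero; therefore $\rho_{\kappa_0}(\Phi^\pm)\neq0$ and $\Omega^\pm_{\kappa_0}\neq0$. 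The main obstacle is the first step, namely extracting from \cite{LV1} the precise control isomorphism with one-dimensional target throughout a full neighborhood of $\kappa_0$, together with the attendant freeness of $M$ over $\mathcal R_{P_{\kappa_0}}$ (equivalently, the constancy of the fibre rank and torsion-freeness). Everything after that point is formal commutative algebra over the discrete valuation ring $\mathcal R_{P_{\kappa_0}}$, the only additional care being to match the normalization of the generators $\phi^\pm_\kappa$ fixed above, on which the periods $\Omega^\pm_\kappa$ depend.
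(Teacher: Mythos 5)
Your proof has the right skeleton --- control theorem, localization at the height-one prime $P_{\kappa_0}$ of the two-dimensional normal ring $\mathcal R$, Nakayama, clearing denominators, and defining $\Omega_\kappa^\pm$ as a ratio in a one-dimensional space --- and this is indeed the skeleton of the argument the paper invokes (that of \cite[Theorem 5.5]{St}, cf.\ \cite[Proposition 3.2]{LV2}, based on \cite[Theorem 2.18]{LV1}). But there is a genuine gap exactly at the step you flag as ``the main obstacle'', and it is not merely a matter of extracting the control theorem carefully: the statement you attribute to \cite{LV1} is false for the module you work with. You take $M=(\W^\ord_{\mathcal R})^\pm=(\W^\ord\otimes_\Lambda\mathcal R)^\pm$, on which the Hecke algebra $\mathfrak h_\infty^\ord$ acts only through the factor $\W^\ord$; since $\W^\ord$ contains \emph{every} ordinary eigensystem of tame level $N$ (all branches of $\mathfrak h_\infty^\ord$, not just $\mathcal R$), the Hecke action does not factor through $\mathcal R$, contrary to your parenthetical justification. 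Accordingly the arithmetic fibres are not one-dimensional: by associativity of the tensor product,
\[
M\otimes_{\mathcal R,\kappa}F_\kappa\;\cong\;\bigl(\W^{\ord}\otimes_{\Lambda,\,\kappa|_\Lambda}F_\kappa\bigr)^{\pm},
\]
and control identifies this with the $\pm$-part of the \emph{full} ordinary cohomology $H^1(\Gamma_{r_\kappa},V_{n_\kappa}(F_\kappa))^{\ord}$, whose dimension is the number of all ordinary eigenforms of that weight, level and character --- not $1$. Localizing at the prime $P_{\kappa_0}$ of $\mathcal R$ removes none of the other eigensystems: the generic fibre $M\otimes_{\mathcal R}{\rm Frac}(\mathcal R)\cong\W^{\ord,\pm}\otimes_\Lambda{\rm Frac}(\mathcal R)$ has dimension $\rank_\Lambda\W^{\ord,\pm}$, typically much larger than one, so $M_{P_{\kappa_0}}$ is not free of rank one, Nakayama does not furnish a single generator, and for a class $\Phi^\pm$ produced your way there is no reason that $\rho_\kappa(\Phi^\pm)$ is proportional to $\phi_\kappa^\pm$: in general it has nonzero components along all the other eigensystems.

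The missing idea is that the isotypic cutting-down must be done with respect to the \emph{Hecke algebra}, not with respect to $\mathcal R$: one localizes $\W^\ord\otimes_\Lambda\mathcal R$ at the ideal of $\mathfrak h_\infty^\ord\otimes_\Lambda\mathcal R$ determined by the eigensystem of the branch (equivalently, works with a suitable localization of $\W^\ord\otimes_{\mathfrak h_\infty^\ord}\mathcal R$), and it is \emph{this} Hecke-isotypic module whose arithmetic $\pm$-fibres are one-dimensional by \cite[Theorem 2.18]{LV1}; your DVR/Nakayama argument then applies verbatim to it. To return from that localization to an honest class $\Phi^\pm\in\W^\ord_{\mathcal R}$ one must clear Hecke denominators: the eigen-equations $T\cdot\Phi^\pm=\lambda(T)\cdot\Phi^\pm$ (with $\lambda$ the character of the branch) hold only after multiplication by some $s\in\mathcal R\setminus P_{\kappa_0}$, finitely many $T$ sufficing because the Hecke algebra is finitely generated. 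This is the true origin of the neighborhood $\mathcal U_0$: only for arithmetic $\kappa$ with $\kappa(s)\neq 0$ does $\rho_\kappa(\Phi^\pm)$ lie on the line $F_\kappa\cdot\phi_\kappa^\pm$, so that $\Omega_\kappa^\pm$ is even defined. In your write-up $\mathcal U_0$ plays no essential role (it only guarantees fibre generation at nearby $\kappa$, which the statement does not require) --- a symptom of the gap. Once $M$ is replaced by the Hecke-isotypic localization and $\mathcal U_0$ by the locus $\kappa(s)\neq0$, the remainder of your argument (freeness over the discrete valuation ring $\mathcal R_{P_{\kappa_0}}$, choice of generator, definition of the periods, and nonvanishing at $\kappa_0$ via Nakayama) goes through and coincides with the proof the paper cites.
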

\begin{proof}
This is an easy consequence of \cite[Theorem 2.18]{LV1} 
and follows along the lines 
of the proof of \cite[Theorem 5.5]{St}, cf. \cite[Proposition 3.2]{LV2}.\end{proof}

We now normalize our choices as follows. With $\mathcal U_0 $ as above, define 
\[\mathcal U^{\rm arith}_0 :=\mathcal U_0 \cap\mathcal X^{\rm arith} .\] 
Fix $\kappa\in\mathcal U^{\rm arith}_0 $ and  
an embedding $\bar\Q_p\hookrightarrow\C$. 
Let 
$f_\kappa^{\rm JL}$ denote a modular form on $\Gamma_{r_\kappa}$ corresponding to $f_\kappa$ 
by the Jacquet-Langlands correspondence, which is well defined up to elements in $\C^\times$. View 
$\phi_\kappa^\pm$ as an element in $H^1(\Gamma_{r_\kappa},V_n(\C))^\pm$. 
Choose a representative $\Phi_\gamma^\pm$ of $\Phi^\pm$, by which 
we mean that if $\Phi^\pm=\sum_i\Phi_i^\pm\otimes r_i$, then we choose 
a representative $\Phi^\pm_{i,\gamma}$ for all $i$. 
Also, we will write $\rho_\kappa(\Phi)(P)$ as 
\[\int_{\Z_p\times\Z_p^\times} \epsilon_\kappa(y)P(x,y)d\Phi_\gamma^\pm:=
\sum_i\kappa(r_i)\cdot
\int_{\Z_p\times\Z_p^\times} \epsilon_\kappa(y)P(x,y)d\Phi_{i,\gamma}^\pm.\]
With this notation, we see that the two cohomology classes
\[\gamma\longmapsto\int_{\Z_p\times\Z_p^\times} \epsilon_\kappa(y)P(x,y)d\Phi_\gamma^\pm(x,y)\]
and 
\[\gamma\longmapsto\Omega_\kappa^\pm\cdot
\int_{\tau}^{\gamma(\tau)}P(z,1)f_\kappa^{{\rm JL},\pm}(z)dz\] are cohomologous in 
$H^1(\Gamma_{r_\kappa},V_{n_\kappa}(\C))$, for any choice of $\tau\in\mathcal H$. 

\subsection{Metaplectic Hida Hecke algebras}\label{metaplectic}
Let $\sigma: {{\Lambda}_N} \rightarrow {{\Lambda}_N}$ be the ring homomorphism associated to the group homomorphism $t \mapsto t^2$ on $\Z_p^\times\times(\Z/N\Z)^\times$, and denote by the same symbol its restriction to 
$\Lambda$ and $\mathcal O[\![\Z_p^\times]\!]$. 
We let $\Lambda_\sigma$,  $\mathcal O[\![\Z_p^\times]\!]_\sigma$ 
and $ {\Lambda}_{N,\sigma}$ denote, respectively, 
$\Lambda$, $\mathcal O[\![\Z_p^\times]\!]$
and ${\Lambda}_{N}$ viewed as algebras over themselves via $\sigma$. 
The ordinary metaplectic $p$-adic Hida Hecke algebra we will consider is the $\Lambda$-algebra 
\[{\widetilde{\mathcal R}}:=
\mathcal R\otimes_{\Lambda} \Lambda_\sigma.\]

Define as above 
\[{\widetilde{\mathcal X} }:=\Hom^{\rm cont}_{\mathcal O\text{-alg}}({\widetilde{\mathcal R}},\bar\Q_p)\]
and let the set $\widetilde{\mathcal X}^{\rm arith} $ of arithmetic points in 
${\widetilde{\mathcal X} }$ to consist of those $\tilde\kappa$ such that the composition 
\[\xymatrix{
W\ar@{^(->}[r]& 
\Lambda \ar@{^(->}[rr]^-{\lambda\mapsto 1\otimes\lambda}&&
{\widetilde{\mathcal R}}\ar@{^(->}[r]^-{\tilde\kappa}&\bar\Q_p}\]
has the form $\gamma\mapsto\psi_{\tilde\kappa}(\gamma)\gamma^{n_{\tilde\kappa}}$ with 
$n_{\tilde\kappa}:=k_{\tilde\kappa}-2$ 
for an  integer $k_{\tilde\kappa}\geq 2$ (called the {weight of $\tilde\kappa$}) and a finite order character 
$\psi_{\tilde\kappa}:W\rightarrow\bar\Q$ (called the {wild character of $\tilde\kappa$}). Let $r_{\tilde\kappa}$ 
the smallest among the positive integers $t$ such that $1+p^t\Z_p\subseteq\ker(\psi_{\tilde\kappa})$. 

We have a  
map $p:{\widetilde{\mathcal X} }\rightarrow \mathcal X $ induced by pull-back from the canonical 
map $\mathcal R\rightarrow {\widetilde{\mathcal R}}$. The map $p$ 
restricts to arithmetic points.

As above, define the $\Lambda$-algebra (or $\Lambda_N$-algebra) 
\begin{equation}\label{R_N-def}
\widetilde{\mathcal R}_{N}:={\mathcal R}\otimes_\Lambda\Lambda_{N,\sigma}\end{equation}
via $\lambda\mapsto 1\otimes\lambda$.  

We easily see that 
\[\widetilde{\mathcal R}_N\simeq \widetilde{\mathcal R}[\Delta]\] as $\Lambda_N$-algebras, 
where we enhance $\widetilde{\mathcal R}[\Delta]$  with the following structure of $\Lambda_N\simeq\Lambda[\Delta]$-algebra: 
for $\sum_{i}\lambda_i\cdot\delta_i\in\Lambda[\Delta]$ (with $\lambda_i\in\Lambda$ and 
$\delta_i\in\Delta$) and $\sum r_j\cdot\delta'_j\in\widetilde{\mathcal R}[\Delta]$ (with $r_j=\sum_h r_{j,h}\otimes\lambda_{j,h}
\in\widetilde{\mathcal R}$, $r_{j,h}\in\mathcal R$, $\lambda_{j,h}\in\Lambda_\sigma$, 
and $\delta'_j\in\Delta$), 
we set \[\big(\sum_i\lambda_i\cdot\delta_i\big)\cdot\big(\sum_jr_j\cdot\delta'_j\big):=\sum_{i,j,h}
\big(r_{j,h}\otimes(\lambda_i\lambda_{j,h})\big)\cdot(\delta_i\delta'_j).\] 

As above, extend 
$\tilde\kappa\in\widetilde{\mathcal X}^{\rm arith} $ to a continuous $\mathcal O$-algebra morphism 
$\tilde\kappa_N:\widetilde{\mathcal R}_{N}\rightarrow\bar\Q_p$ by setting 
\[\tilde\kappa_N\left(\sum_{i=1}^nx_i\cdot\delta_i\right):=
\sum_{i=1}^n\tilde\kappa(x_i)\cdot\psi_\mathcal R(\delta_i)\] for $x_i\in\widetilde{\mathcal R}$ and $\delta_i\in\Delta$, where $\psi_\mathcal R$ is the character of $\mathcal R$. 
If we denote by $\widetilde{\mathcal X}_N $ the $\mathcal O$-module 
of continuous $\mathcal O$-linear homomorphisms from $\widetilde{\mathcal R}_N$ to $\bar\Q_p$, the above correspondence sets 
up an injective map $\widetilde{\mathcal X}^{\rm arith} \hookrightarrow\widetilde{\mathcal X}_N $
and we let $\widetilde{\mathcal X}_N^{\rm arith}  $ denote the 
image of $\widetilde{\mathcal X}^{\rm arith} $. Put   
$\epsilon_{\tilde\kappa}:=\psi_{\tilde\kappa}\cdot \psi_\mathcal R\cdot\omega^{-n_{\tilde\kappa}}$, which we view 
as a Dirichlet character of $(\Z/Np^{r_{\tilde\kappa}}\Z)^\times$, 
and call the pair $(\epsilon_{\tilde\kappa},k_{\tilde\kappa})$ the  signature of $\tilde\kappa_N$, where $\tilde\kappa$ is the 
arithmetic point corresponding to $\tilde\kappa_N$.  

We also have a
map $p_N:{\widetilde{\mathcal X}_N }\rightarrow \mathcal X_N $ induced from the
map $\mathcal R_N\rightarrow {\widetilde{\mathcal R}}_{N}$ taking 
$r \mapsto r\otimes1$ by pull-back.
The map $p_N$ also restricts to 
arithmetic points. The maps $p$ and $p_N$ make the following diagram commute:
\[\xymatrix{\widetilde{\mathcal X}^{\rm arith} \ar@{^(->}[r]\ar[d]^-{p} & \widetilde{\mathcal X}^{\rm arith}_N \ar[d]^-{p_N}\\
\mathcal X^{\rm arith} \ar@{^(->}[r] & \mathcal X^{\rm arith}_N 
}\] where the projections take a signature $(\epsilon,k)$ to $(\epsilon^2,2k)$.

\subsection{The $\Lambda$-adic correspondence}
In this part, we combine the explicit integral formula of Shimura and the fact that the toric integrals can be $p$-adically interpolated to show the existence of a $\Lambda$-adic Shimura-Shintani-Waldspurger correspondence with the expected interpolation property. This follows very closely \cite[\S 6]{St}.

Let 
$\tilde\kappa_N\in\widetilde{\mathcal X}_N^{\rm arith}$ 
of signature 
$(\epsilon_{\tilde\kappa},k_{\tilde\kappa})$. 
Let $L_r$ denote the order of $\M_2(F)$ consisting of matrices $\smallmat a{b/{Mp^r}}{Mp^rc}{d}$ 
with $a,b,c,d\in\mathcal O_F$.  Define
\[\mathcal O_{B,r}:=\iota^{-1}\big( i_F^{-1}\big(i_F(B\otimes 1)\cap L_r\big)\big)\] 
Then $\mathcal O_{B,r}$ 
is the maximal order introduced in \S\ref{sec-SSL} (and denoted $\mathcal O_B'$ there)
defined in terms of the maximal order $\mathcal O_B$ and the integer $Mp^r$. 
Also, 
$S:=\mathcal O_B\cap \mathcal O_{B,r}$ is an Eichler order of $B$ of level $Mp$ containing the fixed Eichler order $R$ of 
level $M$.  
With $\alpha\in V^*\cap \mathcal O_{B,1}$, we have 
\begin{equation}\label{b}
i_F(\alpha)=\mat a{b/(Mp)}c{-a}\end{equation} 
in $\M_2(F)$ with 
$a,b,c\in\mathcal O_F$ and we can 
consider the quadratic forms   
\[
Q_\alpha(x,y):=cx^2-2axy-\big(b/(Mp)\big)y^2,\]
and 
\begin{equation}\label{Q}\tilde Q_\alpha(x,y):=Mp\cdot Q_\alpha(x,y)=Mpcx^2-2Mpaxy-by^2.\end{equation}
Then $\tilde Q_\alpha(x,y)$ 
has coefficients in $\mathcal O_F$ and, composing with $F\hookrightarrow\R$ 
and letting $x=z$, $y=1$, we recover $Q_\alpha(z)$ and $\tilde Q_\alpha(z)$  of \S\ref{sec-SSL} 
(defined by means of the isomorphism $i_\infty$).  
Since each prime $\ell| Mp$ is split in $F$, the elements $a,b,c$ can be viewed 
as elements in $\Z_\ell$ via our fixed embedding $\bar\Q\hookrightarrow\bar\Q_\ell$, 
for any prime $\ell| Mp$ (we will continue writing $a,b,c$ for these elements, with a slight abuse 
of notation). So, letting $b_\alpha\in\Z$ such that $i_\ell(\alpha)\equiv\smallmat *{b_\alpha/(Mp)}{**}{*}$ modulo 
$i_\ell(S\otimes_\Z\Z_\ell)$, for all $\ell| Mp$, 
we have $b\equiv b_\alpha$ modulo $Mp\Z_\ell$ as elements in $\Z_\ell$, for all $\ell| Mp$,  
and thus we get 
\begin{equation}\label{eta}
\eta_{\epsilon_{\tilde\kappa}}(\alpha)=\epsilon_{\tilde\kappa}(b_\alpha)=\epsilon_{\tilde\kappa}(b)
\end{equation} for $b$ as in \eqref{b}.  

For any $\nu\in\D$, we may define an $\mathcal O$-valued measure $j_\alpha(\nu)$  on $\Z_p^\times$ 
by the formula: 
\[\int_{\Z_p^\times}f(t)dj_\alpha(\nu)(t):=\int_{\Z_p\times\Z_p^\times}f\big(\tilde Q_\alpha(x,y)\big)d\nu(x,y).\]
for any continuous function $f:\Z_p^\times\rightarrow\mathcal \C_p$. Recall that the group of $\mathcal O$-valued 
measures on $\Z_p^\times$ is isomorphic to the Iwasawa algebra ${{\mathcal O[\![\Z_p^\times]\!]}}$, and thus 
we may view $j_\alpha(\nu)$ as an element in ${{\mathcal O[\![\Z_p^\times]\!]}}$ (see, for example, \cite[\S 3.2]{CS}). 
In particular, for 
any group-like element $[\lambda]\in{{\mathcal O[\![\Z_p^\times]\!]}}$ we have:  
\[\int_{\Z_p^\times}f(t)d\big([\lambda]\cdot j_\alpha(\nu)\big)(t)=
\int_{\Z_p^\times}\left(\int_{\Z_p^\times}f(ts)d[\lambda](s)\right)d j_\alpha(\nu)(t)=
\int_{\Z_p^\times}f(\lambda t)d j_\alpha(\nu)(t).\] On the other hand, 
\[\int_{\Z_p\times\Z_p^\times}f\big(\tilde Q_\alpha(x,y)\big)d(\lambda\cdot\nu)=
\int_{\Z_p\times\Z_p^\times}f\big(\tilde Q_\alpha(\lambda x,\lambda y)\big)d\nu=
\int_{\Z_p\times\Z_p^\times}f\big(\lambda^2 \tilde Q_\alpha(x,y)\big)d\nu
\]
and we conclude that $j_\alpha(\lambda\cdot\nu)=[\lambda^2]\cdot j_{\alpha}(\nu)$. In other words, $j_\alpha$ 
is a $\mathcal O[\![\Z_p^\times]\!]$-linear map
\[j_\alpha:\D\longrightarrow {{\mathcal O[\![\Z_p^\times]\!]}}_\sigma.\]

Before going ahead, let us introduce some notation. 
Let $\chi$ be a Dirichlet character modulo $Mp^r$, for a positive integer $r$, 
which we decompose accordingly with the isomorphism 
$(\Z/Np^r\Z)^\times\simeq (\Z/N\Z)^\times\times(\Z/p^r\Z)^\times$ into the product $\chi=\chi_N\cdot\chi_p$ with 
$\chi_N:(\Z/N\Z)^\times\rightarrow \C^\times$ and 
$\chi_p:(\Z/p^r\Z)^\times\rightarrow \C^\times$. Thus, we will write $\chi(x)=\chi_N(x_N)\cdot\chi_p(x_p)$, 
where $x_N$ and $x_p$ are the projections of 
$x\in(\Z/Np^r\Z)^\times$ to $(\Z/N\Z)^\times$ and $(\Z/p^r\Z)^\times$, respectively. To simplify the notation, 
we will suppress the $N$ and $p$ from the notation for $x_N$ and $x_p$, thus simply writing $x$ for any of the two. 
Using the isomorphism 
$(\Z/N\Z)^\times\simeq(\Z/M\Z)^\times\times(\Z/D\Z)^\times$, decompose $\chi_N$ as $\chi_N=\chi_M\cdot\chi_D$
with $\chi_M$ and $\chi_D$ characters
on $(\Z/M\Z)^\times$ and $(\Z/D\Z)^\times$, respectively. In the following, 
we only need  the case when $\chi_D=1$.

Using the above notation, we may define a $\mathcal O[\![\Z_p^\times]\!]$-linear map
$J_\alpha:\D\rightarrow {{\mathcal O[\![\Z_p^\times]\!]}}$
by \[
J_\alpha(\nu)=\epsilon_{\tilde\kappa,M}(b)
\cdot\epsilon_{\tilde\kappa,p}(-1)\cdot j_\alpha(\nu)\]
with $b$ as in \eqref{b}. 
Set $\D_N:=\D\otimes_{\mathcal O[\![\Z_p^\times]\!]}\Lambda_N$, where the map 
${\mathcal O[\![\Z_p^\times]\!]}\rightarrow\Lambda_N$ is induced 
from the map $\Z_p^\times\rightarrow \Z_p^\times\times(\Z/N\Z)^\times$ 
on group-like elements given by  $x\mapsto x\otimes 1$. Then $J_\alpha$ can be 
extended to a $\Lambda_N$-linear 
map $J_\alpha:\D_N\rightarrow\Lambda_{N,\sigma}$. Setting 
$\D_{\mathcal R_N}:=\mathcal R_N\otimes_{\Lambda_N}\D_N$ and extending by 
$\mathcal R_N$-linearity over $\Lambda_N$ we finally obtain a $\mathcal R_N$-linear map, again denoted by the same symbol, 
\[J_\alpha:\D_{\mathcal R_N}\longrightarrow \widetilde{\mathcal R}_N.\]
For $\nu\in\D_N$ and $r\in \mathcal R_N$ we thus have 
\[J_\alpha(r\otimes\nu)= \epsilon_{\tilde\kappa,M}(b)
\cdot\epsilon_{\tilde\kappa,p}(-1)\cdot r\otimes j_\alpha(\nu).\]

For the next result, for any arithmetic point $\kappa_N\in\mathcal X_N^{\rm arith}$ 
coming from $\kappa\in\mathcal X^{\rm arith} $, 
extend $\rho_\kappa$ in \eqref{rho-kappa} by $\mathcal R_N$-linearity over $\mathcal O[\![\Z_p^\times]\!]$, 
to get a map \[\rho_{\kappa_N}:\D_{\mathcal R_N}\longrightarrow V_{n_\kappa}\] defined by 
$\rho_{\kappa_N}(r\otimes \nu):=\rho_\kappa(\nu)\cdot\kappa_N(r)$, for $\nu\in\D$ and $r\in\mathcal R_N$. 
To simplify the notation, set
\begin{equation}\label{pairing}
 \langle \nu,\alpha\rangle_{\kappa_N}  :=  \rho_{\kappa_N}(\nu)(\tilde Q_\alpha^{n_{\tilde\kappa}/2})  .\end{equation}
The following is essentially \cite[Lemma (6.1)]{St}.

\begin{lem}\label{lemma3.2} Let 
$\tilde\kappa_N\in\widetilde{\mathcal X}_N^{\rm arith}  $ with signature $(\epsilon_{\tilde\kappa},k_{\tilde\kappa})$ 
and define $\kappa_N:=p_N(\tilde\kappa_N)$.  
Then for 
any $\nu\in \D_{\mathcal R_N}$ we have: 
\[\tilde\kappa_N\big(J_\alpha(\nu)\big)=\eta_{\epsilon_{\tilde\kappa}}(\alpha)\cdot\langle \nu,\alpha\rangle_{\kappa_N}.\] 
\end{lem}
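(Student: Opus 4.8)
The plan is to reduce to simple tensors and then rewrite the value $\tilde\kappa_N(J_\alpha(\nu))$ as a $p$-adic integral against $\nu$ which can be compared coefficient-by-coefficient with the integral defining $\langle\nu,\alpha\rangle_{\kappa_N}$. Since $\D_{\mathcal R_N}=\mathcal R_N\otimes_{\Lambda_N}\D_N$ and both sides of the asserted identity are additive in $\nu$, I would first reduce to $\nu=r\otimes\nu_0$ with $r\in\mathcal R_N$ and $\nu_0\in\D$ (absorbing the $\Lambda_N$-factor of $\D_N$ into $r$). The key point is that the scalar $r$ factors out identically on both sides: because $\tilde\kappa_N$ is an $\mathcal O$-algebra homomorphism and, by construction of $p_N$, $\kappa_N=\tilde\kappa_N\circ(r\mapsto r\otimes 1)$, we get $\tilde\kappa_N\bigl(J_\alpha(r\otimes\nu_0)\bigr)=\epsilon_{\tilde\kappa,M}(b)\,\epsilon_{\tilde\kappa,p}(-1)\cdot\kappa_N(r)\cdot\tilde\kappa_N\bigl(1\otimes j_\alpha(\nu_0)\bigr)$, while on the right $\langle r\otimes\nu_0,\alpha\rangle_{\kappa_N}=\kappa_N(r)\cdot\rho_\kappa(\nu_0)\bigl(\tilde Q_\alpha^{\,n_{\tilde\kappa}}\bigr)$. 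Cancelling $\kappa_N(r)$ reduces everything to the case $r=1$.

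Next I would determine the $p$-adic character through which $\tilde\kappa_N$ sees the measure $j_\alpha(\nu_0)$. Using the description $\widetilde{\mathcal R}_N\simeq\widetilde{\mathcal R}[\Delta]$ together with the recipe $\tilde\kappa_N(\sum x_i\delta_i)=\sum\tilde\kappa(x_i)\psi_\mathcal R(\delta_i)$, the shape $\gamma\mapsto\psi_{\tilde\kappa}(\gamma)\gamma^{n_{\tilde\kappa}}$ of $\tilde\kappa$ on $W$, and the relation $\epsilon_{\tilde\kappa}=\psi_{\tilde\kappa}\cdot\psi_\mathcal R\cdot\omega^{-n_{\tilde\kappa}}$, I would show that the group-like image of $t\in\Z_p^\times$ in $\widetilde{\mathcal R}_N$ is sent by $\tilde\kappa_N$ to $\epsilon_{\tilde\kappa,p}(t)\,t^{n_{\tilde\kappa}}$ (writing $t=\langle t\rangle\,\omega(t)$ for its wild and Teichm\"uller parts, only the $p$-part of $\epsilon_{\tilde\kappa}$ survives, since $t\mapsto(t,1)$ contributes trivially to the $(\Z/N\Z)^\times$-component of $\Delta$). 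Identifying $j_\alpha(\nu_0)$ with a measure on $\Z_p^\times$ and integrating this character against it, the defining property of $j_\alpha$ then gives
\[\tilde\kappa_N\bigl(1\otimes j_\alpha(\nu_0)\bigr)=\int_{\Z_p^\times}\epsilon_{\tilde\kappa,p}(t)\,t^{n_{\tilde\kappa}}\,dj_\alpha(\nu_0)(t)=\int_{\Z_p\times\Z_p^\times}\epsilon_{\tilde\kappa,p}\bigl(\tilde Q_\alpha(x,y)\bigr)\,\tilde Q_\alpha(x,y)^{n_{\tilde\kappa}}\,d\nu_0(x,y).\]

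It then remains to match integrands with $\langle\nu_0,\alpha\rangle_{\kappa_N}=\int_{\Z_p\times\Z_p^\times}\epsilon_\kappa(y)\,\tilde Q_\alpha(x,y)^{n_{\tilde\kappa}}\,d\nu_0$ obtained from \eqref{pairing} and \eqref{rho-kappa}. The polynomial factor $\tilde Q_\alpha(x,y)^{n_{\tilde\kappa}}$ is common to both, so after restoring the constant $\epsilon_{\tilde\kappa,M}(b)\,\epsilon_{\tilde\kappa,p}(-1)$ and using $\eta_{\epsilon_{\tilde\kappa}}(\alpha)=\epsilon_{\tilde\kappa}(b)$ from \eqref{eta} together with $\epsilon_\kappa=\epsilon_{\tilde\kappa}^{2}$, the whole statement collapses to the pointwise character equality $\epsilon_{\tilde\kappa,p}\bigl(-\tilde Q_\alpha(x,y)\bigr)=\epsilon_{\tilde\kappa,p}(b\,y^2)$ on the support of $\nu_0$, i.e. for $(x,y)\in\Z_p\times\Z_p^\times$ with $\tilde Q_\alpha(x,y)\in\Z_p^\times$. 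This follows from the congruence $-\tilde Q_\alpha(x,y)\equiv b\,y^2$ modulo the conductor $p^{r_{\tilde\kappa}}$ of $\psi_{\tilde\kappa}$: by \eqref{b} and \eqref{Q} the $x$-dependent terms of $\tilde Q_\alpha$ carry a factor divisible by $Mp$ (and by $Mp^{r_{\tilde\kappa}}$ at level $p^{r_{\tilde\kappa}}$), so the unit $u:=-\tilde Q_\alpha(x,y)/(b\,y^2)$ lies in $1+p^{r_{\tilde\kappa}}\Z_p\subseteq\ker\psi_{\tilde\kappa}$, whence $\epsilon_{\tilde\kappa,p}(u)=1$.

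I expect this last step to be the main obstacle: it is the only place where the interaction between the conductor $p^{r_{\tilde\kappa}}$ of the wild character $\psi_{\tilde\kappa}$ and the level normalization built into $\tilde Q_\alpha$ is used, and it requires care in separating the tame ($M$-)part — supplied by the explicit factor $\epsilon_{\tilde\kappa,M}(b)$ in the definition of $J_\alpha$, using that each $\ell\mid Mp$ splits in $F$ so that $b\equiv b_\alpha$ is well defined — from the wild ($p$-)part, where the measure genuinely varies and the congruence must be made precise. Everything else is formal manipulation of the tensor structures and the measure-to-integral dictionary.
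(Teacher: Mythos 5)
Your proposal matches the paper's own proof essentially step for step: reduce to simple tensors $r\otimes\nu_0$, factor out $\kappa_N(r)$ via $\kappa_N=\tilde\kappa_N\circ(r\mapsto r\otimes 1)$, identify the restriction of $\tilde\kappa_N$ to group-like elements of $\Z_p^\times$ as $t\mapsto \epsilon_{\tilde\kappa,p}(t)\,t^{n_{\tilde\kappa}}$, transfer the integral through $j_\alpha$, and match integrands using $\epsilon_{\tilde\kappa,p}\big(\tilde Q_\alpha(x,y)\big)=\epsilon_{\tilde\kappa,p}(-b\,y^2)$ together with \eqref{eta} and $\epsilon_\kappa=\epsilon_{\tilde\kappa}^2$. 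The congruence you single out as the main obstacle is precisely the step the paper relies on as well (the $x$-dependent terms of $\tilde Q_\alpha$ are divisible by $Mp$, which settles the case $r_{\tilde\kappa}=1$ actually used later), and the paper asserts it with even less justification than you give, so nothing is missing relative to the paper's own argument.
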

\begin{proof} For $\nu\in\D_N$ and $r\in \mathcal R_N$ we have 
\[\begin{split}\tilde\kappa_N\big(J_\alpha(r\otimes\nu)\big)&=
\tilde\kappa_N\big(\epsilon_{\tilde\kappa,M}(b)
\cdot\epsilon_{\tilde\kappa,p}(-1)\cdot r\otimes j_\alpha(\nu)\big)\\
&=\epsilon_{\tilde\kappa,M}(b)
\cdot\epsilon_{\tilde\kappa,p}(-1)\cdot\tilde\kappa_N( r\otimes 1)\cdot\tilde\kappa_N\big(1\otimes j_\alpha(\nu)\big)\\
&=\epsilon_{\tilde\kappa,M}(b)
\cdot\epsilon_{\tilde\kappa,p}(-1)\cdot\kappa_N(r)\cdot\int_{\Z_p^\times}\tilde\kappa_N(t)dj_\alpha(\nu)\end{split}\]
and thus, noticing that $\tilde\kappa_N$ restricted to $\Z_p^\times$ is $\tilde\kappa_N(t)=
\epsilon_{\tilde\kappa,p}(t)t^{n_{\tilde\kappa}}$, 
we have 
\[\tilde\kappa_N\big(J_\alpha(r\otimes\nu)\big)=\epsilon_{\tilde\kappa,M}(b)
\cdot\epsilon_{\tilde\kappa,p}(-1)\cdot \kappa_N(r)\int_{\Z_p\times\Z_p^\times}
\epsilon_{\tilde\kappa,p}(\tilde Q_\alpha(x,y))\tilde Q_\alpha(x,y)^{n_{\tilde\kappa}/2}d\nu.\]
Recalling \eqref{Q}, and viewing $a,b,c$ as elements in $\Z_p$, 
we have, for $(x,y)\in\Z_p\times\Z_p^\times$,  
\[\epsilon_{\tilde\kappa,p}\big(\tilde Q_\alpha(x,y)\big)=\epsilon_{\tilde\kappa,p}(-by^2)=\epsilon_{\tilde\kappa,p}(-b)
\epsilon_{\tilde\kappa,p}(y^2)=\epsilon_{\tilde\kappa,p}(-b)\epsilon_{\tilde\kappa,p}^2(y)=
\epsilon_{\tilde\kappa,p}(-b)\epsilon_{\kappa,p}(y).\] 
Thus, since $\epsilon_{\tilde\kappa}(-1)^2=1$, we get: 
\[\tilde\kappa_N\big(J_\alpha(r\otimes\nu)\big)=\kappa_N(r)\cdot 
\epsilon_{\tilde\kappa,M}(b)
\cdot\epsilon_{\tilde\kappa,p}(b)\cdot\rho_\kappa(\nu)(\tilde Q_\alpha^{n_{\tilde\kappa}/2})=\eta_{\epsilon_\kappa}(\alpha)\cdot\langle \nu,\alpha\rangle_{\kappa_N}\]
where for the last equality use \eqref{eta} and \eqref{pairing}.\end{proof}

Define \[\W_{\mathcal R_N}:=\W\otimes_{\mathcal O[\![\Z_p^\times]\!]}\mathcal R_N,\] 
the structure of $\mathcal O[\![\Z_p^\times]\!]$-module of $\mathcal R_N$ being that induced by 
the composition of the two maps $\mathcal O[\![\Z_p^\times]\!]\rightarrow\Lambda_N\rightarrow\mathcal R_N$ 
described above. There is a canonical map 
\[\vartheta:\W_{\mathcal R_N}\longrightarrow H^1(\Gamma_0,\D_{\mathcal R_N})\] described as follows: 
if $\nu_\gamma$ is a representative of an element $\nu$ in $\W$ and $r\in \mathcal R_N$, then 
$\vartheta(\nu\otimes r)$ is represented by the cocycle $\nu_\gamma\otimes r$. 

For $\nu\in \W_{\mathcal R_N}$ represented by $\nu_\gamma$ and $\xi\geq 1$ an integer, define 
\[\theta_{\xi}(\nu):= \sum_{\calC\in R(\Gamma_1), q(\calC)=\xi} \frac{J_{\alpha_\calC}(\nu_{\gamma_{\alpha_\calC}})}{\gert_{\alpha_\calC}}.\]

\begin{dfn} For $\nu\in\W_{\mathcal R_N}$, 
the formal Fourier expansion 
\[
\Theta(\nu):=\sum_{\xi\geq 1}\theta_{\xi}(\nu)q^\xi
\] in $\mathcal R_N[\![q]\!]$ is called the {$\Lambda$-adic Shimura-Shintani-Waldspurger lift} of $\nu$. 
For any $\tilde\kappa\in\widetilde{\mathcal X}^{\rm arith} $, 
the formal power series expansion  
\[\Theta(\nu)(\tilde\kappa_N):=\sum_{\xi\geq 1}
\tilde\kappa_N\big(\theta_{\xi}(\nu)\big)q^\xi\] is called the {$\tilde\kappa$-specialization} of $\Theta(\nu)$. 
\end{dfn}

There is a natural map
\[\W_{\mathcal R}\longrightarrow\W_{\mathcal R_N}\] taking $\nu\otimes r$ to itself (use that $\mathcal R$ has 
a canonical map to $\mathcal R_N\simeq\mathcal R[\Delta]$, as described above). So, for any choice of sign, 
$\Phi^\pm\in \W_{\mathcal R}$ will be viewed as an element in $\W_{\mathcal R_N}$. 

From now on we will use the following notation. Fix $\tilde\kappa_0\in\widetilde{\mathcal X}^{\rm arith} $ and put 
$\kappa_0:=p(\tilde\kappa_0)\in\mathcal X^{\rm arith} $. 
Recall the neighborhood $\mathcal U_{0} $ of $\kappa_0$ in Theorem \ref{thm-LV}. 
Define $\widetilde{\mathcal U}_0 :=p^{-1}(\mathcal U_0 )$ and  
\[\widetilde{\mathcal U}^{\rm arith}_0 :=\widetilde{\mathcal U}_0 \cap 
\widetilde{\mathcal X}^{\rm arith} .\] 
For each $\tilde\kappa\in\widetilde{\mathcal U}_0^{\rm arith}$ put 
$\kappa=p(\tilde\kappa)\in \mathcal U_0^{\rm arith} $.
Recall that if $(\epsilon_{\tilde\kappa},k_{\tilde\kappa})$ 
is the signature of $\tilde\kappa$, then 
$(\epsilon_\kappa,k_\kappa):=(\epsilon_{\tilde\kappa}^2,2k_{\tilde\kappa})$ is that of $\kappa_0$. 
For any $\kappa:=p(\tilde\kappa)$ as above, we may consider the modular form 
\[f_\kappa^{\rm JL}\in S_{k_\kappa}(\Gamma_{r_\kappa},\epsilon_{\kappa})\] 
and its Shimura-Shintani-Waldspurger lift \[h_\kappa=\sum_\xi a_\xi(h_\kappa)q^\xi\in S_{k_\kappa+1/2}(4Np^{r_\kappa},\chi_\kappa), \quad
\text{where }\chi_\kappa(x):=\epsilon_{\tilde\kappa}(x)\left(\frac{-1}{x}\right)^{k_\kappa},\]
normalized as in \eqref{SS1} and \eqref{SS2}. 
For our fixed $\kappa_0$, recall the elements $\Phi:=\Phi^{+}$ chosen 
as in Theorem \ref{thm-LV} and define $\phi_\kappa:=\phi_\kappa^+$ and 
$\Omega_\kappa:=\Omega_\kappa^+$ for $\kappa\in\mathcal U_0^{\rm arith}$. 

\begin{prop}\label{prop-final-1}For all $\tilde\kappa\in\widetilde{\mathcal U}_{0}^{\rm arith}$ such that $r_{\kappa}=1$ 
we have 
\[\tilde\kappa_N\big(\theta_{\xi}(\Phi )\big)={\Omega_\kappa}\cdot a_\xi(h_\kappa)
\quad\text{and}\quad \Theta(\Phi )(\tilde\kappa_N)= {\Omega_\kappa}\cdot h_\kappa.\]
\end{prop}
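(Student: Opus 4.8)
The plan is to establish the coefficientwise identity $\tilde\kappa_N(\theta_\xi(\Phi))=\Omega_\kappa\cdot a_\xi(h_\kappa)$ for every integer $\xi\ge 1$; summing against $q^\xi$ then yields the asserted identity of $q$-expansions, since by definition $\Theta(\Phi)(\tilde\kappa_N)=\sum_\xi\tilde\kappa_N(\theta_\xi(\Phi))q^\xi$ and $h_\kappa=\sum_\xi a_\xi(h_\kappa)q^\xi$. Fixing $\xi$, I first unwind the definition of $\theta_\xi$ and push $\tilde\kappa_N$, which is a continuous $\mathcal O$-algebra homomorphism, inside the finite sum over conjugacy classes:
\[
\tilde\kappa_N\big(\theta_\xi(\Phi)\big)=\sum_{\calC\in R(\Gamma_1),\,q(\calC)=\xi}\frac{1}{\gert_{\alpha_\calC}}\,\tilde\kappa_N\big(J_{\alpha_\calC}(\Phi_{\gamma_{\alpha_\calC}})\big).
\]
Lemma \ref{lemma3.2}, applied term by term, rewrites each summand as $\eta_{\epsilon_{\tilde\kappa}}(\alpha_\calC)\cdot\langle\Phi_{\gamma_{\alpha_\calC}},\alpha_\calC\rangle_{\kappa_N}$, where by \eqref{pairing} the pairing is the value of the specialized cocycle $\rho_{\kappa_N}(\Phi)$ at $\gamma_{\alpha_\calC}$, evaluated on the appropriate power of $\tilde Q_{\alpha_\calC}$.

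The heart of the matter is to identify this pairing with $\Omega_\kappa$ times the corresponding term of Shimura's Fourier coefficient. By Theorem \ref{thm-LV} (taking the sign $+$ and viewing $\Phi=\Phi^+$ in $\W_{\mathcal R_N}$ through the natural map $\W_{\mathcal R}\to\W_{\mathcal R_N}$, compatibly with $\rho_{\kappa_N}$ extending $\rho_\kappa$), the class $\rho_{\kappa_N}(\Phi)$ equals $\Omega_\kappa\cdot\phi_\kappa$ in $H^1(\Gamma_{r_\kappa},V_{n_\kappa}(F_\kappa))$. I would then deduce the equality of the actual numbers obtained by evaluating a representing cocycle at the specific element $\gamma_{\alpha_\calC}$ and pairing with the appropriate power of $\tilde Q_{\alpha_\calC}$. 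This passage from cohomology classes to cocycle values is legitimate because $\gamma_{\alpha_\calC}\in\Gamma_{\alpha_\calC}$ commutes with $\alpha_\calC$ and hence fixes $\tilde Q_{\alpha_\calC}$: the functional ``evaluate the cocycle at $\gamma_{\alpha_\calC}$, then apply the $\gamma_{\alpha_\calC}$-invariant polynomial'' annihilates coboundaries and so descends to $H^1$ --- this is exactly the mechanism that makes $P(f,\calC,\Gamma)$ depend only on $\calC$ in \S\ref{sec-SSL}. Via the archimedean comparison of the last paragraph of \S\ref{CT}, which under $\bar\Q_p\hookrightarrow\C$ matches $\rho_{\kappa_N}(\Phi)$ with $\Omega_\kappa$ times the class attached to $f_\kappa^{\rm JL}$ that defines $\phi_\kappa$, one concludes that $\langle\Phi_{\gamma_{\alpha_\calC}},\alpha_\calC\rangle_{\kappa_N}$ is $\Omega_\kappa$ times the value of $\phi_\kappa$ entering \eqref{SS1}.

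It then remains to collect the terms and compare with the closed formula \eqref{SS1} for $a_\xi(h_\kappa)$ applied to $f=f_\kappa^{\rm JL}$. Here the hypothesis $r_\kappa=1$ is precisely what makes the two expressions coincide: the level is $Mp^{r_\kappa}=Mp$, so the normalization $\tilde Q_\alpha=Mp\cdot Q_\alpha$ of \eqref{Q} is the one for which \eqref{SS1} holds with ``$M$'' replaced by ``$Mp$''; the form $f_\kappa^{\rm JL}$ lives on $\Gamma_{r_\kappa}=\Gamma_1$, so the index set $R(\Gamma_1)$ of $\theta_\xi$ agrees with the set $R(\Gamma_{r_\kappa})$ occurring in \eqref{SS1}; and, by \eqref{eta}, $\eta_{\epsilon_{\tilde\kappa}}=\eta_\psi$ for the character $\psi=\epsilon_{\tilde\kappa}$ of $f_\kappa^{\rm JL}$. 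Substituting the identification of the previous paragraph gives
\[
\tilde\kappa_N\big(\theta_\xi(\Phi)\big)=\sum_{\calC\in R(\Gamma_1),\,q(\calC)=\xi}\frac{\eta_{\epsilon_{\tilde\kappa}}(\alpha_\calC)}{\gert_{\alpha_\calC}}\,\langle\Phi_{\gamma_{\alpha_\calC}},\alpha_\calC\rangle_{\kappa_N}=\Omega_\kappa\cdot a_\xi(h_\kappa),
\]
which is the claim.

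I expect the main obstacle to be the middle step: passing cleanly from the cohomological equality $\rho_{\kappa_N}(\Phi)=\Omega_\kappa\phi_\kappa$ to the numerical equality of cocycle values at the various $\gamma_{\alpha_\calC}$, while simultaneously tracking the comparison between the $p$-adic and complex pictures and absorbing the normalizing constants (the factors $M^{k-1}$, $2\mu(\Gamma\backslash\mathcal H)$, $\xi^{-1/2}$ and $\Omega_f^+$) that were folded into the definition of $a_\xi(h)$ in \S\ref{sec2.3}. Once it is granted that the pairing against $(\gamma_{\alpha_\calC},\tilde Q_{\alpha_\calC})$ is a cohomological invariant, the remainder is a term-by-term bookkeeping against \eqref{SS1}.
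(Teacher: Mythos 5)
Your proposal is correct and follows essentially the same route as the paper's proof: unwind $\theta_\xi$, apply Lemma \ref{lemma3.2} term by term, invoke Theorem \ref{thm-LV} to replace $\rho_{\kappa_N}(\Phi)$ by $\Omega_\kappa\cdot\phi_\kappa$, and compare with the normalization \eqref{SS1}, the hypothesis $r_\kappa=1$ making the levels and index sets match. Your additional justification that pairing a cocycle value at $\gamma_{\alpha_\calC}$ against the $\gamma_{\alpha_\calC}$-invariant polynomial $\tilde Q_{\alpha_\calC}^{k_{\tilde\kappa}-1}$ kills coboundaries is a correct elaboration of a step the paper leaves implicit, not a different argument.
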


\begin{proof}
By Lemma \ref{lemma3.2} we have 
\[
\tilde\kappa_N\big(\theta_{\xi}(\Phi)\big)= 
\sum_{\calC\in R(\Gamma_{1}), q(\calC)=\xi} \frac{\eta_{\epsilon_{\tilde\kappa}}(\alpha_\mathcal C)}{\gert_{\alpha_\calC}}
\rho_{\kappa_N}(\Phi )(\tilde Q_{\alpha_\mathcal C}^{n_{\tilde\kappa}/2}). 
\] 
Using Theorem \ref{thm-LV}, we get 
\[
\tilde\kappa_N\big(\theta_{\xi}(\Phi )\big)= 
\sum_{\calC\in R(\Gamma_{1}), q(\calC)=\xi} \frac{\eta_{\epsilon_{\tilde\kappa}}(\alpha_\mathcal C)
\cdot \Omega _\kappa}{\gert_{\alpha_\calC}}
\phi _\kappa(\tilde Q_{\alpha_\mathcal C}^{k_\kappa-1}).
\] Now \eqref{SS1} shows the statement on 
$\tilde\kappa_N(\theta_{\xi}(\Phi ))$,  
while that on $\Theta(\Phi )(\tilde\kappa_N)$ is a formal consequence of the
previous one.
\end{proof}

\begin{cor}\label{coro-final} Let $a_p$ denote the image of the Hecke operator $T_p$ in $\mathcal R$. Then 
$\Theta(\Phi)|T_p^2=a_p\cdot \Theta(\Phi)$. 
\end{cor}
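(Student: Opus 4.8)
The plan is to establish the coefficientwise identities $\theta_{p^2\xi}(\Phi)=a_p\cdot\theta_\xi(\Phi)$ in $\widetilde{\mathcal R}_N$ for every $\xi\ge 1$; granting that the $\Lambda$-adic operator $T_p^2$ sends $\sum_\xi c_\xi q^\xi\in\widetilde{\mathcal R}_N[\![q]\!]$ to $\sum_\xi c_{p^2\xi}q^\xi$, these identities are precisely the assertion $\Theta(\Phi)|T_p^2=a_p\cdot\Theta(\Phi)$. So described, $T_p^2$ visibly commutes with every specialization map $\tilde\kappa_N$, and at an arithmetic point $\tilde\kappa$ its specialization is the classical operator $U_{p^2}$ on half-integral weight forms of level $4Np^{r_\kappa}$, which acts as the pure shift $a_\xi\mapsto a_{p^2\xi}$ because $p$ divides the level. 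Since each $\theta_\xi(\Phi)$ lies in $\widetilde{\mathcal R}_N$, a ring that is reduced and finite over $\Lambda_N$, I would check the identity after applying $\tilde\kappa_N$ for $\tilde\kappa$ ranging over the arithmetic points of $\widetilde{\mathcal U}_0^{\rm arith}$ with $r_\kappa=1$; these points (of trivial wild character and varying weight inside the neighborhood) are dense, so that an element of $\widetilde{\mathcal R}_N$ vanishing at all of them vanishes, exactly as in the density step of \cite[\S 6]{St}.

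For a fixed such $\tilde\kappa$, with $\kappa=p(\tilde\kappa)$, Proposition \ref{prop-final-1} gives $\tilde\kappa_N(\theta_\xi(\Phi))=\Omega_\kappa\cdot a_\xi(h_\kappa)$ for all $\xi$, where $h_\kappa$ is the classical Shimura-Shintani-Waldspurger lift of $f_\kappa^{\rm JL}$. By the Hecke-equivariance of Shimura's correspondence \cite{Sh}, the operator $U_{p^2}$ on the half-integral side matches $U_p$ on the integral side, so $h_\kappa$ is a $U_{p^2}$-eigenform with eigenvalue the $U_p$-eigenvalue of $f_\kappa^{\rm JL}$, namely $\kappa(a_p)$; hence $a_{p^2\xi}(h_\kappa)=\kappa(a_p)\cdot a_\xi(h_\kappa)$. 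Combining these,
\[
\tilde\kappa_N\big(\theta_{p^2\xi}(\Phi)\big)=\Omega_\kappa\cdot a_{p^2\xi}(h_\kappa)=\kappa(a_p)\cdot\Omega_\kappa\cdot a_\xi(h_\kappa)=\kappa(a_p)\cdot\tilde\kappa_N\big(\theta_\xi(\Phi)\big).
\]
Finally, $a_p\in\mathcal R$ maps to $a_p\otimes1\in\widetilde{\mathcal R}_N$ and, since $p(\tilde\kappa)=\kappa$, one has $\tilde\kappa_N(a_p\otimes1)=\kappa(a_p)$; thus $\tilde\kappa_N(a_p\cdot\theta_\xi(\Phi))=\kappa(a_p)\cdot\tilde\kappa_N(\theta_\xi(\Phi))$, which agrees with the displayed value. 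Both sides of the desired identity therefore have the same image under every such $\tilde\kappa_N$, and by density they coincide in $\widetilde{\mathcal R}_N$.

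The main obstacle I anticipate is not the density or the evaluation $\tilde\kappa_N(a_p)=\kappa(a_p)$---both are formal and parallel to Stevens's argument---but the precise matching of the $\Lambda$-adic operator $T_p^2$ with the classical Hecke action at the specializations. Concretely, one must confirm that the specialization of $T_p^2$ is genuinely $U_{p^2}$ (so that the extra terms present in the general $T_{p^2}$ drop out, which is where $p\mid 4Np^{r_\kappa}$ is used) and that Shimura's correspondence, in the normalization fixed in \S\ref{sec-SSL}, identifies the $U_{p^2}$-eigenvalue of $h_\kappa$ with the $U_p$-eigenvalue $\kappa(a_p)$ of $f_\kappa^{\rm JL}$ exactly, with no intervening twist. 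Once this compatibility is secured, the corollary follows from Proposition \ref{prop-final-1} by the density argument above.
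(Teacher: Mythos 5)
Your proposal is correct and follows essentially the same route as the paper: specialize at the dense set of arithmetic points $\tilde\kappa\in\widetilde{\mathcal U}_0^{\rm arith}$ with $r_\kappa=1$, apply Proposition \ref{prop-final-1} to identify $\Theta(\Phi)(\tilde\kappa_N)$ with $\Omega_\kappa\cdot h_\kappa$, use that $h_\kappa$ is a $T_p^2$-eigenform with eigenvalue $\kappa(T_p)$ by the Hecke equivariance of the Shimura--Shintani--Waldspurger lift, and conclude the coefficientwise identity $\theta_{\xi p^2}(\Phi)=a_p\cdot\theta_\xi(\Phi)$ by density. The compatibilities you flag as potential obstacles (the shift description of $T_p^2$ on $q$-expansions and the eigenvalue matching) are exactly what the paper takes as given at this point, so your argument matches its proof.
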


\begin{proof} For any $\kappa\in\mathcal X^{\rm arith}$, let $a_p(\kappa):=\kappa(T_p)$, which is a $p$-adic unit 
by the ordinarity assumption. For all $\tilde\kappa\in \widetilde{\mathcal U}_0^{\rm arith}$ with $r_\kappa=1$, we have \[\Theta(\Phi)(\tilde\kappa_N)|T_p^2={\Omega_\kappa}\cdot h_\kappa|T_p^2 =a_p(\kappa)\cdot{\Omega_\kappa}\cdot h_\kappa=
a_p(\kappa)\cdot\Theta(\Phi)(\tilde\kappa_N).\] Consequently, 
\[\tilde\kappa_N\big(\theta_{\xi p^2}(\Phi)\big)=a_p(\kappa)\cdot\tilde\kappa_N\big(\theta_{\xi}(\Phi)\big)\]
for all $\tilde\kappa$ such that $r_\kappa=1$. Since this subset is dense in $\widetilde{\mathcal X}_N$, we conclude 
that $\theta_{\xi p^2}(\Phi) =a_p \cdot\theta_{\xi}(\Phi)$ and so $\Theta(\Phi)|T_p^2=a_p\cdot \Theta(\Phi)$. \end{proof}

For any integer $n\geq 1$ and any quadratic form $Q$ with coefficients in $F$,  
write $[Q]_n$ for the class of $Q$ modulo the action of $i_F(\Gamma_n)$.    
Define $\mathcal F_{n,\xi}$ to be the subset of the $F$-vector space of quadratic forms with coefficients in $F$
consisting of quadratic forms 
$\tilde Q_{\alpha}$ such that $\alpha\in V^*\cap\mathcal O_{B,n}$ and $-{\rm nr}(\alpha)=\xi$. 
Writing $\delta_{\tilde Q_\alpha}$ for the discriminant of $Q_\alpha$,
the above set can be equivalently described as 
\[\mathcal F_{n,\xi}:=
\{\tilde Q_{\alpha} |\,\alpha\in V^*\cap\mathcal O_{B,n},\, \delta_{\tilde Q_\alpha}=Np^n\xi\}.\]
Define $\mathcal F_{n,\xi}/\Gamma_n$ to be the 
set $\{[\tilde Q_\alpha]_n|\,\tilde Q_\alpha\in\mathcal F_{n,\xi}\}$ of equivalence classes of $\mathcal F_{n,\xi}$ under 
the action of $i_F(\Gamma_n)$. 
A simple computation shows that $Q_{g^{-1}\alpha g}=Q_{\alpha}|g$ for 
all $\alpha\in V^*$ and all $g\in\Gamma_n$,
and thus we find 
\[\mathcal F_{n,\xi}/\Gamma_n=\{[\tilde Q_{\mathcal C_\alpha}]_n|\, \mathcal C\in R(\Gamma_n), \, 
\delta_{\tilde Q_\alpha}=Np^n\xi \}.\] We also note that, in the notation of \S \ref{sec-SSL}, if $f$ has weight character $\psi$, defined modulo
$Np^n$, and level $\Gamma_n$, the Fourier coefficients $a_\xi(h)$ of the Shimura-Shintani-Waldspurger lift $h$ of $f$ are given 
by 
\begin{equation}\label{Fourier-last}
a_\xi(h)=\sum_{[Q]\in\mathcal F_{n,\xi}/\Gamma_n}\frac{\psi(Q)}{\gert_Q}\phi^+_f\big(Q(z)^{k-1}\big)\end{equation} and, 
if $Q=\tilde Q_\alpha$, we put ${\psi(Q)}:=\eta_\psi(b_\alpha)$ and ${\gert_Q}:=\gert_{\alpha}$. Also, if we let 
\[\mathcal F_n/\Gamma_n:=\coprod_{\xi}\mathcal F_{n,\xi}/\Gamma_n\] we can write 
\begin{equation}\label{h-last}
h=\sum_{[Q]\in\mathcal F_n/\Gamma_n } \frac{\psi(Q)}{\gert_Q}\phi^+_f\big(Q(z)^{k-1}\big)q^{\delta_Q/(Np^n)}.\end{equation}

Fix now an integer $m\geq 1$ and let $n\in\{1,m\}$. 
For any $t\in (\Z/p^n\Z)^\times$ and any integer $\xi\geq 1$, define 
$\mathcal F_{n,\xi,t}$ to be the subset of $\mathcal F_{n,\xi}$ consisting of forms such that  
$Np^nb_\alpha\equiv t\mod Np^m $. Also, define $\mathcal F_{n,\xi,t}/\Gamma_n$ to be the 
set of equivalence classes of $\mathcal F_{n,\xi,t}$ under 
the action of $i_F(\Gamma_n)$. 
If $\alpha\in V^*\cap\mathcal O_{B,m}$ and 
$i_F(\alpha)=\smallmat abc{-a}$, then 
\begin{equation}\label{tilde-Q}\tilde Q_{\alpha}(x,y)=Np^ncx^2-2Np^naxy-Np^nby^2\end{equation}  from which we see 
that there is an inclusion 
$\mathcal F_{m,\xi,t}\subseteq \mathcal F_{1,\xi p^{m-1},t}$.
If $\tilde Q_\alpha$ and $\tilde Q_{\alpha'}$ 
belong to $\mathcal F_{m,\xi,t}$, and $\alpha'=g\alpha g^{-1}$ for some $g\in\Gamma_m$, then, since 
$\Gamma_m\subseteq\Gamma_1$, we see that $\tilde Q_\alpha$ and $\tilde Q_{\alpha'}$ 
represent the same class in $\mathcal F_{1,\xi p^{m-1},t}/\Gamma_1$.
This shows that  $[\tilde Q_{\alpha}]_m\mapsto[\tilde Q_{\alpha}]_1$ 
gives a well-defined map  \[\pi_{m,\xi,t}:\mathcal F_{m,\xi,t}/\Gamma_m\longrightarrow \mathcal F_{1, \xi p^{m-1},t}/\Gamma_1. 
\]  

\begin{lem}\label{lemma-pi} The map $\pi_{m,\xi,t}$ is bijective. 
\end{lem}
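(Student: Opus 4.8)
The plan is to prove bijectivity by reducing to a purely local statement at $p$ and then comparing $\Gamma_1$- and $\Gamma_m$-orbits of quadratic forms directly through $i_p$. At every prime $\ell\mid N$ the two levels $1$ and $m$ impose identical conditions, so the inclusion $\mathcal F_{m,\xi,t}\subseteq\mathcal F_{1,\xi p^{m-1},t}$, the two equivalence relations, and the map $\pi_{m,\xi,t}$ all depend only on the $p$-component. Via $i_p$ one has $\Gamma_n=\{g\in\SL_2(\Z_p):g\equiv\smallmat{1}{*}{0}{1}\bmod p^n\}$ at $p$, so $\Gamma_m\subseteq\Gamma_1$ with $\Gamma_1/\Gamma_m$ a finite $p$-group, while $\mathcal O_{B,n}$ is the maximal order with $i_p(\mathcal O_{B,n}\otimes\Z_p)=\smallmat{\Z_p}{p^{-n}\Z_p}{p^n\Z_p}{\Z_p}$. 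A class in either set is then a binary quadratic form over $\Z_p$ of the fixed invariant $Np^m\xi$, the level-$m$ membership being a stronger $p$-divisibility of the leading coefficient than the level-$1$ one. First I would record that the hypothesis $Np^nb_\alpha\equiv t\bmod Np^m$ with $t$ a unit forces the $y^2$-coefficient of $\tilde Q_\alpha$ to be a $p$-adic unit, and that the valuation of the invariant then forces the middle coefficient to be divisible by $p$; these two facts drive both halves of the argument.

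For surjectivity I would, given a class in $\mathcal F_{1,\xi p^{m-1},t}/\Gamma_1$, exhibit a $\Gamma_1$-translate lying in $\mathcal F_{m,\xi,t}$. Writing the form as $Ax^2+Bxy+Cy^2$ with $C$ a $p$-adic unit and $p\mid B$, I apply the lower unipotent $g=\smallmat{1}{0}{v}{1}$ with $v\in p\Z_p$: it lies in $\Gamma_1$, fixes $C$ (hence preserves $t$), fixes the invariant, and sends the leading coefficient to $A+Bv+Cv^2$. Since $C$ is a unit and $p\mid B$, completing the square shows that choosing $v\equiv -B/(2C)\bmod p^{\lceil m/2\rceil}$ (still with $v\in p\Z_p$) forces $v_p(A+Bv+Cv^2)\ge m$, i.e. the level-$m$ divisibility. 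The resulting form lies in $\mathcal F_{m,\xi,t}$ and is $\Gamma_1$-equivalent to the original, proving surjectivity.

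For injectivity, suppose $\tilde Q_\alpha,\tilde Q_{\alpha'}\in\mathcal F_{m,\xi,t}$ with $\tilde Q_{\alpha'}=\tilde Q_\alpha|g$ for some $g\in\Gamma_1$. Because $\gamma\mapsto Q_\gamma$ is a linear isomorphism and $Q_{g^{-1}\alpha g}=Q_\alpha|g$, this forces $\alpha'=g^{-1}\alpha g$, so the admissible conjugators form a coset of the centralizer of $\alpha$, which is a torus; it then suffices to move $g$ by this centralizer into $\Gamma_m$. I would carry this out at $p$: the requirements that $i_p(\alpha)$ and $i_p(g)^{-1}i_p(\alpha)i_p(g)$ both lie in $i_p(\mathcal O_{B,m}\otimes\Z_p)$ and share the same unit $y^2$-coefficient modulo $p^m$ pin down $i_p(g)$ modulo the centralizing torus, forcing it into $\Gamma_m$, whence $[\tilde Q_\alpha]_m=[\tilde Q_{\alpha'}]_m$. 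The main obstacle is precisely this local rigidity at $p$: controlling the conjugator modulo the centralizer and descending it into $\Gamma_m$ for injectivity, paralleled by the explicit construction of the valuation-raising conjugator in $\Gamma_1$ for surjectivity. Both steps rest on the unit condition supplied by $t$ and on $\Gamma_1/\Gamma_m$ being a finite $p$-group, and this is where essentially all the verification lies.
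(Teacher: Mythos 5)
Your computational core for surjectivity---hit the form with a lower unipotent and complete the square, using the unit $y^2$-coefficient supplied by the $t$-condition as pivot---is exactly the mechanism in the paper's proof. But the reduction you build everything on contains the real gap. You assert that via $i_p$ one has $\Gamma_n=\{g\in\SL_2(\Z_p):g\equiv\smallmat{1}{*}{0}{1}\bmod p^n\}$ and that the two equivalence relations ``depend only on the $p$-component.'' This is false as stated: $\Gamma_n$ is a global arithmetic group, and its image under $i_p$ is only a countable (dense) subgroup of that compact open subgroup. The sets $\mathcal F_{n,\xi,t}$ consist of global forms $\tilde Q_\alpha$ with $\alpha\in V^*\cap\mathcal O_{B,n}$, and the equivalences are by the global groups $\Gamma_m\subseteq\Gamma_1$, so a comparison of orbits cannot be made ``purely local at $p$'' without a mechanism converting local transformations into global ones. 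Concretely, your surjectivity element $g=\smallmat{1}{0}{v}{1}$ with $v\equiv -B/(2C)\bmod p^{\lceil m/2\rceil}$, $v\in p\Z_p$, is an element of $\SL_2(\Z_p)$, not of $\Gamma_1$: applying it produces neither a $\Gamma_1$-translate nor even a form of the shape $\tilde Q_{\alpha'}$ for a global $\alpha'\in V^*\cap\mathcal O_{B,m}$. The missing ingredient is the Strong Approximation Theorem for the norm-one group of $B$ (available since $B$ is indefinite), which is precisely the key step in the paper: one finds a \emph{global} $\tilde g\in\Gamma_1$ with $i_\ell(\tilde g)\equiv\smallmat{1}{0}{ab^{-1}}{1}\bmod Np^m$ for \emph{all} $\ell\mid Np$, and conjugates $\alpha_{\mathcal C}$ by it. Your completion-of-the-square identity (that the value at the critical $v$ is $-\,\mathrm{disc}/(4C)$, hence highly divisible by $p$) is the right local input, but by itself it proves nothing about $\Gamma_1$-orbits.

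The injectivity half has a second, independent problem. You reduce to modifying the conjugator $g$ by the centralizer of $\alpha$ so as to land in $\Gamma_m$, asserting that level-$m$ integrality plus the common unit $y^2$-coefficient ``pin down $i_p(g)$ modulo the centralizing torus.'' That claim is asserted, not proved; and even granting its local form, you would still need a \emph{global} element of the centralizer $\Q(\alpha)^\times$ (a real quadratic torus, since $\alpha^2=\xi>0$) with prescribed congruence behavior so that the corrected conjugator lies in $\Gamma_m$. Tori do not satisfy strong approximation---the closure of $\Gamma_1\cap\Q(\alpha)^\times$ in the local torus can be a proper subgroup---so this descent is a genuine obstruction, not a routine verification. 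The paper avoids the centralizer entirely and proves something stronger: writing $g=\smallmat{\alpha}{\beta}{\gamma}{\delta}\in i_F(\Gamma_1)$ and using the explicit transformation of the coefficients, the level-$m$ divisibility of the $x^2$- (and $xy$-) coefficients of \emph{both} forms forces $\gamma\equiv 0\bmod Np^m$; the condition $b\equiv b'\equiv t$ then forces $\delta^2\equiv 1\bmod Np^m$, and since $\delta\equiv 1\bmod Np$ already, $\delta\equiv 1\bmod Np^m$, whence (by the determinant) $g$ itself lies in $\Gamma_m$. So the very conjugator witnessing level-$1$ equivalence witnesses level-$m$ equivalence; no torus correction is needed. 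To repair your proof, replace the centralizer argument by this direct computation, and insert strong approximation wherever you pass from matrices over $\Z_p$ to elements of $\Gamma_1$.
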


\begin{proof} We first show the injectivity. 
For this, suppose $\tilde Q_{\alpha}$ and $\tilde Q_{\alpha'}$ are in 
$\mathcal F_{m,\xi,t}$
and $[\tilde Q_\alpha]_{1}=[\tilde Q_{\alpha'}]_{1}$. 
So there exists $g=\smallmat\alpha\beta\gamma\delta$ in $i_F(\Gamma_1)$ such that 
such that $\tilde Q_{\alpha}=\tilde Q_{\alpha'}|g$. If $\tilde Q_{\alpha}=cx^2-2axy-by^2$, and easy computation shows that 
$\tilde Q_{\alpha'}=c'x^2-2a'xy-b'y^2$ with 
\[c'=c\alpha^2-2a\alpha\gamma-b\gamma^2\]
\[a'=-c\alpha\beta+a\beta\gamma+a\alpha\delta+b\gamma\delta\]
\[b'=-c\beta^2+2a\beta\delta+b\delta^2.\] The first condition shows that 
$\gamma\equiv 0\mod Np^m$. 
We have $b\equiv b'\equiv t\mod Np^m$, so $\delta^2\equiv1\mod Np^m$. 
Since $\delta\equiv 1\mod Np$, we see that $\delta\equiv1\mod Np^m$ too. 

We now show the surjectivity. For this, fix $[\tilde Q_{\alpha_\mathcal C}]_1$ in the target of $\pi$, 
and choose a representative 
\[\tilde Q_{\alpha_\mathcal C}= cx^2-2axy-by^2
\] (recall $Mp^m\xi| \delta_{\tilde Q_{\alpha_\mathcal C}}$, $Np| c$, $Np| a$, and $b\in \mathcal O_F^\times$, the last condition due to $\eta_\psi(\alpha_\mathcal C)\neq 0$). 
By the Strong Approximation Theorem, we can find $\tilde g\in \Gamma_1$ such that 
\[i_\ell(\tilde g)\equiv \mat 10{ab^{-1}}1\mod Np^m\] for all $\ell| Np$.  
Take $g:=i_F(\tilde g)$, and put $\alpha:=g^{-1}\alpha_\mathcal C g$. An easy computation, using the expressions for 
$a',b',c'$ in terms of $a,b,c$ and $g=\smallmat \alpha\beta\gamma\delta$ as above, shows 
that $\alpha\in V^*\cap  \mathcal O_{B,m}$, $\eta_\psi(\alpha)=t$ and $\delta_{\tilde Q_\alpha}=Np^m\xi$, and 
it follows that $\tilde Q_{\alpha}\in\mathcal F_{m,\xi,t}$. Now \[\pi\big([\tilde Q_{\alpha}]_m\big)=[\tilde Q_{\alpha}]_1=
[\tilde Q_{g^{-1}\alpha_\mathcal C g}]_1=[\tilde Q_{\alpha_\mathcal C}]_1\] 
where the last equality follows because $g\in \Gamma_1$.
 \end{proof}

\begin{prop}\label{prop-final-2} For all $\tilde\kappa\in\widetilde{\mathcal U}_{0}^{\rm arith}$
we have 
\[\Theta(\Phi )(\tilde\kappa_N)|T_p^{r_\kappa-1}= {\Omega_\kappa}\cdot h_\kappa.\]\end{prop}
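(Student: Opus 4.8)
The plan is to reduce the assertion to an identity among the Fourier coefficients of the two half-integral weight forms involved, and to read off the passage from level $\Gamma_1$ to level $\Gamma_{r_\kappa}$ as the effect of $T_p^{r_\kappa-1}$. First I would observe that the computation in the proof of Proposition \ref{prop-final-1} never uses the hypothesis $r_\kappa=1$ until its final appeal to \eqref{SS1}: combining Lemma \ref{lemma3.2} with Theorem \ref{thm-LV} gives, for every $\tilde\kappa\in\widetilde{\mathcal U}_0^{\rm arith}$ and every $\xi\geq1$,
\[\tilde\kappa_N\big(\theta_\xi(\Phi)\big)=\Omega_\kappa\sum_{\calC\in R(\Gamma_1),\,q(\calC)=\xi}\frac{\eta_{\epsilon_{\tilde\kappa}}(\alpha_\calC)}{\gert_{\alpha_\calC}}\,\phi_\kappa\big(\tilde Q_{\alpha_\calC}(z)^{k_{\tilde\kappa}-1}\big).\]
Writing $g:=\Theta(\Phi)(\tilde\kappa_N)/\Omega_\kappa$ and comparing with \eqref{h-last}, this exhibits $g$ as exactly the expansion \eqref{h-last} attached to $f_\kappa^{\rm JL}$ but computed with $\Gamma_1$-classes (the case $n=1$), whereas $h_\kappa$ is the same expansion computed with $\Gamma_{r_\kappa}$-classes (the case $n=r_\kappa$). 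Thus the proposition really compares these two level structures.

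Next I would fix the action of $T_p$: as already used in Corollary \ref{coro-final} (where $T_p^2$ produces the shift by $p^2$), the operator acts on the relevant $q$-expansions by $\sum_\xi a_\xi q^\xi\mapsto\sum_\xi a_{p\xi}q^\xi$, so the $\xi$-th coefficient of $g\,|\,T_p^{r_\kappa-1}$ equals $a_{\xi p^{r_\kappa-1}}(g)$. The proposition is therefore equivalent to the coefficient identity $a_\xi(h_\kappa)=a_{\xi p^{r_\kappa-1}}(g)$ for all $\xi\geq1$.

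To prove this identity I would decompose both coefficients according to the residue $t$ and invoke Lemma \ref{lemma-pi}. The bijection $\pi_{r_\kappa,\xi,t}\colon\mathcal F_{r_\kappa,\xi,t}/\Gamma_{r_\kappa}\xrightarrow{\ \sim\ }\mathcal F_{1,\xi p^{r_\kappa-1},t}/\Gamma_1$ preserves the quadratic form $\tilde Q_\alpha$ itself, hence preserves the weight factor $\psi(Q)=\eta_{\epsilon_{\tilde\kappa}}(\alpha)$; since forms with $\psi(Q)=0$ contribute to neither side, the union over units $t$ accounts for exactly the terms appearing in $a_\xi(h_\kappa)$ and in $a_{\xi p^{r_\kappa-1}}(g)$. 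It remains to check that the period $\phi_\kappa\big(\tilde Q_\alpha(z)^{k_{\tilde\kappa}-1}\big)$ and the order $\gert_\alpha$ agree at the two levels; both are determined by the generator $\gamma_\alpha$ of the stabilizer of $\tilde Q_\alpha$. Here I would reuse the congruences in the proof of Lemma \ref{lemma-pi}: taking the two forms there to be equal shows that any element of $\Gamma_1$ fixing a form of $\mathcal F_{r_\kappa,\xi,t}$ (so with $b\in\mathcal O_F^\times$) already lies in $\Gamma_{r_\kappa}$, whence $\mathrm{Stab}_{\Gamma_1}(\tilde Q_\alpha)=\mathrm{Stab}_{\Gamma_{r_\kappa}}(\tilde Q_\alpha)$. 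Consequently $\gamma_\alpha$ and $\gert_\alpha$ are common to both levels, the two period integrals coincide, and matching summands term by term yields $a_\xi(h_\kappa)=a_{\xi p^{r_\kappa-1}}(g)$, i.e. $\Theta(\Phi)(\tilde\kappa_N)\,|\,T_p^{r_\kappa-1}=\Omega_\kappa\,h_\kappa$; for $r_\kappa=1$ this recovers Proposition \ref{prop-final-1}.

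I expect the main obstacle to be exactly this matching of stabilizers, and therefore of the period integrals, across the two levels: Lemma \ref{lemma-pi} on its own identifies the two index sets only through the underlying quadratic form, and a priori the fundamental automorph $\gamma_\alpha$ could differ between $\Gamma_1$ and $\Gamma_{r_\kappa}$, which would break the term-by-term comparison. The saving point is that for forms with $b$ a unit the two stabilizers coincide, a fact already built into the injectivity computation of Lemma \ref{lemma-pi}. A secondary technical point is to fix the normalization of $T_p$ on half-integral weight $q$-expansions consistently with Corollary \ref{coro-final}, so that $T_p^{r_\kappa-1}$ genuinely realizes the shift $a_\xi\mapsto a_{\xi p^{r_\kappa-1}}$.
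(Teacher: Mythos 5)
Your proposal is correct and follows essentially the same route as the paper's proof: both derive the formula $\tilde\kappa_N(\theta_\xi(\Phi))=\Omega_\kappa\sum_{\calC}\eta_{\epsilon_{\tilde\kappa}}(\alpha_\calC)\gert_{\alpha_\calC}^{-1}\phi_\kappa(\tilde Q_{\alpha_\calC}^{k_\kappa-1})$ from Lemma \ref{lemma3.2} and Theorem \ref{thm-LV} for arbitrary $\tilde\kappa$, read $T_p^{r_\kappa-1}$ as the coefficient shift $a_\xi\mapsto a_{\xi p^{r_\kappa-1}}$, and then invoke the bijection of Lemma \ref{lemma-pi} (after splitting over residues $t$) to match the level-$\Gamma_1$ expansion with the level-$\Gamma_{r_\kappa}$ expansion \eqref{h-last} of $h_\kappa$. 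The only difference is that you explicitly verify that the stabilizers (hence the generators $\gamma_\alpha$, the orders $\gert_\alpha$, and the periods) coincide at the two levels via the injectivity computation of Lemma \ref{lemma-pi}, a compatibility the paper's proof uses implicitly when it identifies $\mathcal F^*_{1,t}$ with $\mathcal F_{r_\kappa,t}$ and compares summands termwise.
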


\begin{proof}
For $r_\kappa=1$, this is Proposition \ref{prop-final-1} above, so we may assume $r_\kappa\geq 2$. 
As in the proof of Proposition \ref{prop-final-1}, combining Lemma \ref{lemma3.2} and Theorem \ref{thm-LV} we get 
\[\Theta(\Phi )(\tilde\kappa_N)= 
\sum_{\xi\geq 1}\left(\sum_{\calC\in R(\Gamma_{1}), q(\calC)=\xi} \frac{\eta_{\epsilon_{\tilde\kappa}}(\alpha_\mathcal C)
\cdot \Omega _\kappa}{\gert_{\alpha_{\mathcal C}}}
\phi _\kappa(\tilde Q_{\alpha_\mathcal C}^{k_\kappa-1})\right)q^\xi
\] which, by \eqref{Fourier-last} and \eqref{h-last} above we may rewrite as 
\[\Theta(\Phi )(\tilde\kappa_N)= \sum_{[Q]\in\mathcal F_1/\Gamma_1}\frac{{\epsilon_{\tilde\kappa}}(Q)
\cdot \Omega _\kappa}{\gert_{Q}}
\phi _\kappa(Q^{k_\kappa-1})q^{\delta_Q/(Np)}
\] By definition of the action of $T_p$ on power series, we have 
\[\Theta(\Phi )(\tilde\kappa_N)|T_p^{r_\kappa-1}=  \sum_{[Q]\in\mathcal F_1/\Gamma_1,\,p^{r_\kappa}|\delta_Q}\frac{{\epsilon_{\tilde\kappa}}(Q)
\cdot \Omega _\kappa}{\gert_{Q}}
\phi _\kappa(Q^{k_\kappa-1})q^{\delta_Q/(Np^{r_\kappa})}.\] Setting 
$\mathcal F_{n,t}/\Gamma_n:=\coprod_{\xi\geq 1}\mathcal F_{n,t,\xi}/\Gamma_n$ 
for $n\in\{1,r_\kappa\}$, Lemma \ref{lemma-pi} shows that  
\[\mathcal F^*_{1,t}:=\{[Q]\in\mathcal F_{1,t}/\Gamma_{1,t}\text{ such that } p^{r_\kappa}| \delta_Q\}\text{ is equal to }\mathcal F_{r_\kappa,t}.\]
Therefore, splitting the above sum over $t\in(\Z/Np^{r_\kappa}\Z)^\times$, we get 
\[\begin{split}\Theta(\Phi )(\tilde\kappa_N)|T_p^{r_\kappa-1}&=
\sum_{t\in(\Z/p^{r_\kappa-1}\Z)^\times} \sum_{[Q]\in\mathcal F_{1,t}^*}\frac{{\epsilon_{\tilde\kappa}}(Q)
\cdot \Omega _\kappa}{\gert_{Q}}
\phi _\kappa(Q^{k_\kappa-1})q^{\delta_Q/(Np^{r_\kappa})} \\
&=\sum_{t\in(\Z/p^{r_\kappa-1}\Z)^\times} \sum_{[Q]\in\mathcal F_{m,t}/\Gamma_{m}}\frac{{\epsilon_{\tilde\kappa}}(Q)
\cdot \Omega _\kappa}{\gert_{Q}}
\phi _\kappa(Q^{k_\kappa-1})q^{\delta_Q/(Np^{r_\kappa})}\\
&=\sum_{[Q]\in\mathcal F_{m}/\Gamma_{m}}\frac{{\epsilon_{\tilde\kappa}}(Q)
\cdot \Omega _\kappa}{\gert_{Q}}
\phi _\kappa(Q^{k_\kappa-1})q^{\delta_Q/(Np^{r_\kappa})}.
\end{split}\]
Comparing this expression with \eqref{h-last} gives the result. 
\end{proof}

We are now ready to state the analogue of \cite[Thm. 3.3]{St}, which is our main result. For the reader's convenience, 
we briefly recall the notation appearing 
below. We denote by $\mathcal X$ the points of the ordinary Hida Hecke algebra, and by $\mathcal X^{\rm arith}$ its arithmetic points. 
For $\kappa_0\in\mathcal X^{\rm arith}$, we denote
by $\mathcal U_0$ 
the $p$-adic neighborhood of $\kappa_0$ appearing in the statement of Theorem \ref{thm-LV} and put 
$\mathcal U_0^{\rm arith}:=\mathcal U_0\cap\mathcal X^{\rm arith}$. 
We also denote by  
$\Phi=\Phi^+\in\W_{\mathcal R}^\ord$ the cohomology class appearing in Theorem \ref{thm-LV}. 
The points $\widetilde{\mathcal X}$ of the metaplectic Hida Hecke algebra 
defined in \S\ref{metaplectic} are equipped with a canonical map $p:\widetilde{\mathcal X}^{\rm arith}\rightarrow{\mathcal X}^{\rm arith}$ on arithmetic points. Let $\widetilde{\mathcal U}^{\rm arith}_0 :=\widetilde{\mathcal U}_0 \cap 
\widetilde{\mathcal X}^{\rm arith}$. For each $\tilde\kappa\in\widetilde{\mathcal U}_0^{\rm arith}$ put 
$\kappa=p(\tilde\kappa)\in \mathcal U_0^{\rm arith} $.
Recall that if $(\epsilon_{\tilde\kappa},k_{\tilde\kappa})$ 
is the signature of $\tilde\kappa$, then 
$(\epsilon_\kappa,k_\kappa):=(\epsilon_{\tilde\kappa}^2,2k_{\tilde\kappa})$ is that of $\kappa_0$. 
For any $\kappa:=p(\tilde\kappa)$ as above, we may consider the modular form 
\[f_\kappa^{\rm JL}\in S_{k_\kappa}(\Gamma_{r_\kappa},\epsilon_{\kappa})\] 
and its Shimura-Shintani-Waldspurger lift \[h_\kappa=\sum_\xi a_\xi(h_\kappa)q^\xi\in S_{k_\kappa+1/2}(4Np^{r_\kappa},\chi_\kappa), \quad
\text{where }\chi_\kappa(x):=\epsilon_{\tilde\kappa}(x)\left(\frac{-1}{x}\right)^{k_\kappa},\]
normalized as in \eqref{SS1} and \eqref{SS2}. Finally, for $\tilde\kappa\in\widetilde{\mathcal X}^{\rm arith}$, we denote by $\tilde\kappa_N$ 
its extension to the metaplectic Hecke algebra $\widetilde{\mathcal R}_N$ defined in \S \ref{metaplectic}. 

\begin{thm}\label{main} Let $\kappa_0 \in \calX^{\rm arith} $.
Then there exists a choice of $p$-adic periods $\Omega_{\kappa}$ for $\kappa\in \mathcal U_{0}$ such that the {$\Lambda$-adic Shimura-Shintani-Waldspurger lift} of $\Phi$
\[
\Theta(\Phi):=\sum_{\xi\geq 1}\theta_{\xi}(\Phi)q^\xi
\] in $\mathcal R_N[\![q]\!]$ has the following properties:

\begin{enumerate}
\item $\Omega_{\kappa_0} \neq 0$.

\item

For any $\tilde\kappa\in\widetilde{\mathcal U}_0^{\rm arith} $, 
 the {$\tilde\kappa$-specialization} of $\Theta(\Phi)$ 
\[\Theta(\nu)(\tilde\kappa_N):=\sum_{\xi\geq 1}
\tilde\kappa \big(\theta_{\xi}(\Phi)\big)q^\xi \text{ belongs to } 
S_{k_\kappa+1/2}(4Np^{r_\kappa},\chi_\kappa'),\] 
where
$\chi_\kappa'(x):=\chi_\kappa(x)\cdot\left(\frac{p}{x}\right)^{k_\kappa-1}$, and satisfies 
\[ \Theta(\Phi)(\tilde\kappa_N) = \Omega_{\kappa} \cdot h_\kappa | T_p^{1-r_\kappa} .\]  
\end{enumerate} \end{thm}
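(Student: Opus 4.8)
The plan is to assemble the two parts of the statement from the propositions already in hand, so that essentially all of the analytic work has been done beforehand. Part (1) is immediate: setting $\Omega_\kappa:=\Omega_\kappa^+$ for $\kappa\in\mathcal U_0^{\rm arith}$, the non-vanishing $\Omega_{\kappa_0}\neq 0$ is part of the conclusion of Theorem \ref{thm-LV}. For $r_\kappa=1$ the operator $T_p^{1-r_\kappa}$ is the identity and part (2) is exactly Proposition \ref{prop-final-1}, with $h_\kappa$ a half-integral weight form by Shimura's theorem (\cite[Theorem 3.1]{Sh}, recalled in \S\ref{sec-SSL}). So I may concentrate on $r_\kappa\geq 2$, where two things must be proved: the interpolation identity $\Theta(\Phi)(\tilde\kappa_N)=\Omega_\kappa\cdot h_\kappa|T_p^{1-r_\kappa}$, and the assertion that this specialization is a genuine half-integral weight form in $S_{k_\kappa+1/2}(4Np^{r_\kappa},\chi'_\kappa)$.

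For the identity I would invert the relation of Proposition \ref{prop-final-2}, namely $\Theta(\Phi)(\tilde\kappa_N)|T_p^{r_\kappa-1}=\Omega_\kappa\cdot h_\kappa$. Since $T_p$ acts on $q$-expansions by $\sum a_\xi q^\xi\mapsto\sum a_{p\xi}q^\xi$, the operator $T_p^{r_\kappa-1}$ is only a one-sided inverse of the shift $T_p^{1-r_\kappa}$: the two compose to the identity precisely on series supported on Fourier indices divisible by $p^{r_\kappa-1}$. Thus the identity follows once I establish that $\Theta(\Phi)(\tilde\kappa_N)$ is supported on such indices, i.e. that $\tilde\kappa_N(\theta_\xi(\Phi))=0$ whenever $p^{r_\kappa-1}\nmid\xi$. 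I would extract this from the explicit expansion obtained in the proof of Proposition \ref{prop-final-2}, combined with the reindexing bijection $\pi_{r_\kappa,\xi,t}$ of Lemma \ref{lemma-pi}, which matches the classes $[Q]\in\mathcal F_1/\Gamma_1$ with $p^{r_\kappa}\mid\delta_Q$ to the classes in $\mathcal F_{r_\kappa}/\Gamma_{r_\kappa}$ producing $h_\kappa$; the point requiring justification is that the complementary classes contribute $0$.

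The modularity and nebentypus claims I would then read off from the classical theory. By Shimura's theorem, $h_\kappa\in S_{k_\kappa+1/2}(4Np^{r_\kappa},\chi_\kappa)$; the operator $T_p^{1-r_\kappa}$ keeps the weight $k_\kappa+1/2$ and the level $4Np^{r_\kappa}$ while twisting the nebentypus by the quadratic character $\left(\frac{p}{x}\right)^{k_\kappa-1}$, landing in $S_{k_\kappa+1/2}(4Np^{r_\kappa},\chi'_\kappa)$, by the standard transformation law for half-integral weight forms. This is the exact analogue of the level-and-character bookkeeping in \cite[Thm.\ 3.3]{St}. Granting the support statement, the identity $\Theta(\Phi)(\tilde\kappa_N)=\Omega_\kappa\cdot h_\kappa|T_p^{1-r_\kappa}$ then places the specialization in the stated space, finishing part (2).

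The main obstacle is the support statement for $r_\kappa\geq 2$, namely the vanishing of $\tilde\kappa_N(\theta_\xi(\Phi))$ at indices with $p^{r_\kappa-1}\nmid\xi$. This cannot be seen class by class: already when $q(\mathcal C)=1$ the local invariant $\eta_{\epsilon_{\tilde\kappa}}(\alpha_{\mathcal C})=\epsilon_{\tilde\kappa}(b_{\alpha_{\mathcal C}})$ is typically a unit, so the vanishing must arise from the $p$-stabilized nature of the specialization map $\rho_\kappa$, which integrates only over $\Z_p\times\Z_p^\times$ and reflects the fact that the lift $h_\kappa$ of the $p$-stabilized newform $f_\kappa$ is $p$-new in the relevant sense. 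Concretely, I expect the vanishing to follow from the local behaviour at $p$ of the measure underlying $\Phi$ together with the wild character $\epsilon_{\tilde\kappa,p}$ of conductor $p^{r_\kappa}$ evaluated on the forms $\tilde Q_\alpha$, with the $U_{p^2}$-eigenrelation of Corollary \ref{coro-final} organizing the resulting $p$-power support consistently. Carrying out this local analysis cleanly, and at the same time checking that the induced twist is exactly $\left(\frac{p}{x}\right)^{k_\kappa-1}$ rather than some other power of the quadratic character, is the delicate heart of the argument and the one step not already packaged in the preceding propositions.
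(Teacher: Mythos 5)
There is a genuine gap, and it stems from a misreading of the operator $T_p^{1-r_\kappa}$. You interpret it as the formal shift $\sum a_\xi q^\xi\mapsto \sum a_\xi q^{p^{r_\kappa-1}\xi}$ (a one-sided inverse of $T_p^{r_\kappa-1}$), which forces you to prove the ``support statement'' that $\tilde\kappa_N(\theta_\xi(\Phi))=0$ whenever $p^{r_\kappa-1}\nmid\xi$. The paper never needs such a statement: the negative power of $T_p$ is defined through the eigenrelation of Corollary \ref{coro-final}. Since $\Theta(\Phi)|T_p^2=a_p\cdot\Theta(\Phi)$ holds identically in $\widetilde{\mathcal R}_N[\![q]\!]$ and $a_p(\kappa)$ is a $p$-adic unit by ordinarity, $T_p$ is invertible on the relevant line and $h_\kappa|T_p^{1-r_\kappa}=a_p(\kappa)^{1-r_\kappa}\,h_\kappa|T_p^{r_\kappa-1}$, i.e.\ a \emph{positive} power of the index-dilation operator rescaled by a unit. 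The paper's proof is then purely algebraic: apply $T_p^{r_\kappa-1}$ to the identity of Proposition \ref{prop-final-2} to get $\Theta(\Phi)(\tilde\kappa_N)|T_p^{2(r_\kappa-1)}=\Omega_\kappa\cdot h_\kappa|T_p^{r_\kappa-1}$, replace the left side by $a_p(\kappa)^{r_\kappa-1}\,\Theta(\Phi)(\tilde\kappa_N)$ using Corollary \ref{coro-final}, divide by the unit, and read off membership in $S_{k_\kappa+1/2}(4Np^{r_\kappa},\chi_\kappa')$ from \cite[Prop. 1.9]{Shimura} applied $r_\kappa-1$ times to $h_\kappa$.

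Moreover, your support statement is not merely left unproven (you yourself flag it as ``the delicate heart'' and only say you \emph{expect} it to hold); it is not salvageable by local analysis at $p$, because it is false in general. Combining Proposition \ref{prop-final-2} with Corollary \ref{coro-final} as above gives the $\xi$-th coefficient of $\Theta(\Phi)(\tilde\kappa_N)$ as $a_p(\kappa)^{1-r_\kappa}\Omega_\kappa\, a_{p^{r_\kappa-1}\xi}(h_\kappa)$, so your vanishing claim is equivalent to $a_\eta(h_\kappa)=0$ for every index $\eta$ with $r_\kappa-1\le v_p(\eta)<2(r_\kappa-1)$. By \eqref{Fourier-last} these coefficients are sums over classes $[\tilde Q_\alpha]$ with $\eta_{\epsilon_{\tilde\kappa}}(\alpha)\neq 0$, and such classes with $v_p(q(\mathcal C))=r_\kappa-1$ do exist; since $f_\kappa$ is at $p$ an ordinary (ramified principal series) form, there is no Waldspurger-type square-class obstruction forcing the corresponding periods to cancel. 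A further symptom of the misreading: the shift operator raises the level, so under your interpretation $h_\kappa|T_p^{1-r_\kappa}$ would live in level $4Np^{2r_\kappa-1}$, contradicting the level $4Np^{r_\kappa}$ asserted in the very statement being proved, whereas the paper's $T_p$ preserves that level and only twists the nebentypus. In short: your part (1) and the case $r_\kappa=1$ agree with the paper, but your argument for $r_\kappa\ge 2$ rests on a false auxiliary claim and must be replaced by the short algebraic inversion via Corollary \ref{coro-final}.
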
 

\begin{proof} The elements $\Omega_\kappa$ are those $\Omega_\kappa^+$ appearing in Theorem \ref{thm-LV}, which we used in 
Propositions \ref{prop-final-1} and \ref{prop-final-2} above, so (1) is clear. 
Applying $T_p^{r_\kappa-1}$ to the formula of Proposition \ref{prop-final-2}, using Corollary \ref{coro-final} and applying 
$a_p(\kappa)^{1-r_\kappa}$ on both sides gives 
\[\Theta(\Phi )(\tilde\kappa_N)= a_p(\kappa)^{1-r_\kappa}{\Omega_\kappa}\cdot h_\kappa|T_p^{r_\kappa-1}.\] 
By \cite[Prop. 1.9]{Shimura}, each application of $T_p$ has the effect of multiplying the character by $\left(\frac{p}{\cdot}\right)$, hence 
\[h_\kappa':=h_\kappa|T_p^{r_\kappa-1}\in
 S_{k_\kappa+1/2}(4Np^{r_\kappa},\chi_\kappa')\]
 with $\chi'_\kappa$ as in the statement. This gives the first part of (2), while   
the last formula follows immediately from Proposition \ref{prop-final-2}.\end{proof}

\begin{rmk}
Theorem \ref{thm-intro} is a direct consequence of Theorem \ref{main}, as we briefly show below. 

Recall the embedding $\Z^{\geq 2}\hookrightarrow\Hom(\Z_p^\times,\Z_p^\times)$ which 
sends $k\in\Z^{\geq 2}$ to the character $x\mapsto x^{k-2}$. 
Extending characters by $\mathcal O$-linearity gives 
a map 
\[
\Z^{\geq 2}\longmono \mathcal X(\Lambda):=\Hom_{\mathcal O\text{-alg}}^{\rm cont}(\Lambda,\bar\Q_p).
\] 
We denote by $k^{(\Lambda)}$ the image of $k\in \Z^{\geq 2}$ in $\mathcal X(\Lambda)$ 
via this embedding. 
We also denote by $\varpi:\mathcal X\rightarrow \mathcal X(\Lambda)$ the finite-to-one map 
obtained by restriction of homomorphisms to $\Lambda$. Let 
$k^{(\mathcal R)}$ be a point 
in $\mathcal X$ of signature $(k,1)$ such that 
$\varpi(k^{(\mathcal R)})=k^{(\Lambda)}$.
A well-known result by Hida (see \cite[Cor. 1.4]{Hida-Galois}) 
shows that $\mathcal R/\Lambda$ is unramified at $k^{(\Lambda)}$. 
As shown in \cite[\S 1]{St}, this implies that 
there is a section $s_{k^{(\Lambda)}}$ of 
$\varpi$ 
which is defined on a neighborhood 
$\mathcal U_{k^{(\Lambda)}}$ 
of $k^{(\Lambda)}$ in $\mathcal X(\Lambda)$ and sends $k^{(\Lambda)}$ to 
$k^{(\mathcal R)}$.  

Fix now $k_0$ as in the statement of Theorem \ref{thm-intro},
corresponding to a cuspform $f_0$ of weight $k_0$ with trivial 
character. The form $f_0$ corresponds to an arithmetic character 
$k_0^{(\mathcal R)}$ of signature $(1,k_0)$ belonging to $\mathcal X$. 
Let  
$\mathcal U_0'$ be the inverse image of $\mathcal U_0$ under the 
section $s_{k^{(\Lambda)}_0}^{-1}$, 
where $\mathcal U_0\subseteq\mathcal X$ is the neighborhood
of $k_0^{(\mathcal R)}$ in Theorem \ref{main}. 
Extending scalars by $\mathcal O$ gives, as above, an injective continuos map 
$
\Hom(\Z_p^\times,\Z_p^\times)\hookrightarrow\mathcal X(\Lambda), 
$
and we let 
$U_0$ be any neighborhood of the character $x\mapsto x^{k_0-2}$ which maps to 
$\mathcal U_0'$ and is contained in the residue class of $k_0$ modulo $p-1$. 
Composing this map with the section 
$\mathcal U_0'\hookrightarrow \mathcal U_0$
gives a continuous injective map  
\[\varsigma:U_0\,\longmono\, \mathcal U_0'\,\longmono\, \mathcal U_0\]
which takes $k_0$ to $k_0^{(\mathcal R)}$, since by 
construction the image of $k_0$ by the first map is $k^{(\Lambda)}_0$. We also note that, more generally, 
$\varsigma(k)=k^{(\mathcal R)}$ because by construction $\varsigma(k)$ restricts to $k^{(\Lambda)}$ 
and its signature is 
$(1,k)$, since the character of $\varsigma(k)$ is trivial. To show the last assertion, 
recall that the character of $\varsigma(k)$
is $\psi_k\cdot\psi_\mathcal R\cdot\omega^{-k}$, and  
note that $\psi_k$ is trivial because 
$k^{(\Lambda)}(x)=x^{k-1}$, and 
$\psi_\mathcal R\cdot\omega^{-k}$ is trivial because the same is true for $k_0$ 
and $k\equiv k_0$ modulo $p-1$. 
In other words, arithmetic points in $\varsigma(U_0)$ correspond to 
cuspforms with trivial character. This is the Hida family of forms with trivial character that we 
considered in the Introduction.

We can now prove Theorem \ref{thm-intro}.  
For all $k\in U_0\cap \Z^{\geq 2}$, put $\Omega_k:=\Omega_{k^{(\mathcal R)}}$
and define $\Theta:=\Theta(\Phi)\circ\varsigma$ 
with $\Phi$ as in Theorem \ref{main} for $\kappa_0=k^{(\mathcal R)}_0$. 
Applying Theorem \ref{main} to $k_0^{(\mathcal R)}$, and restricting to $\varsigma(U_0)$, 
shows that $U_0$, $\Omega_k$ and $\Theta$ 
satisfy the conclusion of Theorem \ref{thm-intro}.\end{rmk}
 
\begin{rmk}
For $\tilde\kappa\in\widetilde{\mathcal U}_{0}^{\rm arith} $ of signature $(\epsilon_{\tilde\kappa}, k_{\tilde\kappa})$ 
with $r_{\tilde\kappa}=1$ 
as in the above theorem, 
$h_\kappa$ is trivial if $(-1)^{k_{\tilde\kappa}}\neq\epsilon_{\tilde\kappa}(-1)$. 
However, since $\phi_{\kappa_0}\neq 0$, it follows that 
$h_{\kappa_0}$ is not trivial as long as 
the necessary condition  $(-1)^{k_{0}}=\epsilon_{0}(-1)$ is verified. 
\end{rmk}

\begin{rmk}
This result can be used to produce a quaternionic
$\Lambda$-adic version of the Saito-Kurokawa lifting, following closely the arguments in \cite[Cor. 1]{LN}.
\end{rmk}

\end{document}